\documentclass[11pt,a4paper]{article}

\usepackage[english]{babel}
\usepackage[utf8]{inputenc}

\usepackage[DIV=11]{typearea}

\usepackage[affil-it]{authblk}

\usepackage{xparse}
\usepackage{amsmath, amssymb, amsfonts, amsthm}
\usepackage{mathtools}
\usepackage{enumitem}

\usepackage[noadjust]{cite}

\newcommand{\R}{\mathbb{R}}

\newcommand{\ee}{\mathrm{e}}

\DeclareDocumentCommand\dd{ o g d() }{
  \IfNoValueTF{#2}{
    \IfNoValueTF{#3}
    {\mathrm{d}\IfNoValueTF{#1}{}{^{#1}}}
    {\mathinner{\mathrm{d}\IfNoValueTF{#1}{}{^{#1}}\argopen(#3\argclose)}}
  }
  {\mathinner{\mathrm{d}\IfNoValueTF{#1}{}{^{#1}}#2} \IfNoValueTF{#3}{}{(#3)}}
}

\newcommand{\eps}{\varepsilon}

\DeclareMathOperator{\sgn}{sgn}

 \newcommand{\LL}{\mathcal{L}}

\newcommand{\vcc}{\vcentcolon}

\DeclarePairedDelimiter\abs{\lvert}{\rvert}
\DeclarePairedDelimiter\norm{\Vert}{\rVert}

\theoremstyle{plain}
\newtheorem{theorem}{Theorem}[section]

\newtheorem{proposition}[theorem]{Proposition}

\theoremstyle{definition}
\newtheorem{definition}[theorem]{Definition}

\theoremstyle{remark}
\newtheorem{remark}[theorem]{Remark}

\numberwithin{equation}{section}

\title{Convergence to equilibrium for a class of coagulation-fragmentation equations without detailed balance}
\author{Apratim Bhattacharya\thanks{Ume{\aa} University,  Department of Mathematics and Mathematical Statistics, 90187 Ume{\aa}, Sweden; National Institute of Science Education and Research, Homi Bhabha National Institute, 752050 Jatni, India,
\texttt{apr.bhattacharya@gmail.com}} { and} Sebastian Throm\thanks{Ume{\aa} University,  Department of Mathematics and Mathematical Statistics, 90187 Ume{\aa}, Sweden, \texttt{sebastian.throm@umu.se}}}
\date{}

\begin{document}

\maketitle

\begin{abstract}
 We prove convergence to equilibrium for a class of coagulation-fragmentation equations that do not satisfy a detailed balance condition. More precisely, we consider perturbations of constant rate kernels. Our result provides in particular explicit convergence rates. Uniqueness of the stationary states is proven as well for the considered class of kernels.
\end{abstract}

\section{Introduction}

Systems which are characterised by the coalescence and splitting of clusters can be observed in many natural and industrial processes. Well-known examples include cloud physics, polymerisation, algal growth or animal group size statistics for swarming species \cite{AcF97,Niw98,Fri00,Dra72,Zif85}. A prominent model to capture this behaviour is given by the \emph{coagulation-fragmentation} equation which reads
\begin{multline}\label{eq:coag:frag}
 \partial_t f_t(x)=\frac{1}{2}\int_{0}^{x}K(x-y,y)f_t(x-y)f_t(y)\dd{y}-f_t(x)\int_{0}^{\infty}K(x,y)f_t(y)\dd{y}\\*
-\frac{1}{2}\int_{0}^{x}F(y,x-y)\dd{y}f_t(x)+\int_{0}^{\infty}F(x,y)f_t(x+y)\dd{y}.
\end{multline}
Here $f_t(x)$ denotes the density of clusters of size $x\in(0,\infty)$ at time $t\geq 0$. The first two terms on the right-hand side correspond to coagulation, i.e.\@ merging of two clusters while the last two ones reflect (binary) fragmentation, i.e.\@ splitting of a cluster into two smaller pieces. The integral kernel $K(x,y)$ describes the rate at which clusters of sizes $x$ and $y$ merge, while $F(x,y)$ describes the rate at which a cluster of size $(x+y)$ splits into pieces $x$ and $y$. The factor $1/2$ in front of the first and third integral reflects the fact that these processes are symmetric.

In particular, to simplify the notation, we introduce the (symmetric) operators $\mathcal{C}_K$ and $\mathcal{F}_F$ corresponding to coagulation and fragmentation with kernels $K$ and $F$ respectively, i.e.\@ we have
\begin{equation}\label{eq:coag:frag:op}
 \begin{aligned}
\mathcal{C}_{K}(f,g)&\vcc=\frac{1}{2}\int_{0}^{x}K(x-y,y)f(x-y)g(y)\dd{y}\\
&\qquad \qquad \qquad-\frac{1}{2}f(x)\int_{0}^{\infty}K(x,y)g(y)\dd{y}-\frac{1}{2}g(x)\int_{0}^{\infty}K(x,y)f(y)\dd{y}\\
\mathcal{F}_{F}(f)&\vcc=-\frac{1}{2}\int_{0}^{x}F(y,x-y)\dd{y}f(x)+\int_{0}^{\infty}F(x,y)f(x+y)\dd{y}.
 \end{aligned}
\end{equation}
The coagulation equation \eqref{eq:coag:frag} can thus be rewritten as
\begin{equation*}
 \partial_tf_t=\mathcal{C}_{K}(f_t,f_t)+\mathcal{F}_{F}(f_t).
\end{equation*}
Moreover, the weak forms of $\mathcal{C}_K$ and $\mathcal{F}_F$ read
\begin{equation}\label{eq:coag:frag:op:weak}
 \begin{aligned}
\int_{0}^{\infty}\mathcal{C}_{K}(f,g)(x)\varphi(x)\dd{x}&=\frac{1}{2}\int_{0}^{\infty}\int_{0}^{\infty}K(x,y)f(x)g(y)\bigl[\varphi(x+y)-\varphi(x)-\varphi(y)\bigr]\dd{x}\dd{y}\\
\int_{0}^{\infty}\mathcal{F}_{F}(f)(x)\varphi(x)\dd{x}&=-\frac{1}{2}\int_{0}^{\infty}\int_{0}^{\infty}F(x,y)f(x+y)\bigl[\varphi(x+y)-\varphi(x)-\varphi(y)\bigr]\dd{x}\dd{y}.
 \end{aligned}
\end{equation}
As an immediate consequence one obtains for the choice $\varphi(x)=x$ that $\mathcal{C}_K$ and $\mathcal{F}_F$ preserve (at least formally) the total mass
\begin{equation*}
 M_1(t)=M_1(f_t)=\int_{0}^{\infty}xf_t(x)\dd{x},
\end{equation*}
i.e.\@ one has $M_1(t)=M_1(0)$ for all $t\geq 0$. In fact, it is well-known that this property holds rigorously provided the coagulation and fragmentation rates satisfy suitable bounds at infinity and zero, respectively. On the other hand, for sufficiently strong coagulation or fragmentation, phenomena known as \emph{gelation} and \emph{shattering} can occur. More precisely, after a finite time, mass is contained in infinitely large (gel) or small (dust) clusters. We refer to \cite{EMP02,Ban20} for more details on this. Throughout this work, we will rely on rate kernels for which mass conservation holds true.

An important question for coagulation-fragmentation models concerns the long-time behaviour. In fact, coagulation leads to a growth of clusters and reduction of the total number of clusters, while fragmentation acts in the opposite direction, i.e.\@ the number of clusters increases while their size becomes smaller. It is thus natural to expect that in the long-time limit these two processes balance and the solution $f_t$ of \eqref{eq:coag:frag} converges to a stationary state (\emph{equilibrium}). 

In fact such behaviour has been proven under specific structural assumptions on the rate kernels $K$ and $F$. More precisely, if the latter satisfy the \emph{detailed-balance condition}, i.e.\@ there exists a non-negative function $Q$ with suitable integrability such that
\begin{equation*}
 K(x,y)Q(x)Q(y)=F(x,y)Q(x+y) \qquad \text{for all } x,y\in (0,\infty).
\end{equation*}
It is straightforward to check that $Q$ is a stationary solution to \eqref{eq:coag:frag}. Moreover,
\begin{equation*}
 \mathcal{H}_Q(f)=\int_{0}^{\infty}f(x)\bigl(\ln(f(x)/Q(x))-1\bigr)\dd{x}
\end{equation*}
defines a Lyapunov function for \eqref{eq:coag:frag}. Relying on the latter one can prove that 
\begin{equation}\label{eq:db:conv}
 f_t\longrightarrow Q\qquad \text{as } t\to \infty
\end{equation}
provided $M_1(f_0)=M_1(Q)$. We note that $Q_z=z^xQ(x)$ for $z>0$ is a stationary solution to \eqref{eq:coag:frag} as well provided that $M_1(Q_z)<\infty$. We refer to \cite{LaM04} and references therein for more details. In general no rate of convergence is known for \eqref{eq:db:conv}.

However, in the special case of constant kernels, more precise results can be obtained and one can show that $f_t$ approaches the equilibrium at an exponential rate. In particular, the (unique) equilibrium is explicitly given by $Q(x)=\ee^{-x/\sqrt{\rho}}$ where $\rho$ is the total mass $\rho=M_1(f_0)$. The precise result is stated in Theorem~\ref{Thm:AB} below.

A special class of coagulation-fragmentation models, satisfying the detailed-balance condition, where explicit convergence rates have been obtained is the \emph{Becker-Döring} model. The latter is a discrete version of \eqref{eq:coag:frag} where the clusters, consisting of atomic particles, only interact with these monomers. More precisely, a  cluster can only merge with or emit a monomer. Relying again on entropy methods, explicit convergence rates have been obtained for the Becker-Döring model \cite{JaN03,CaL13}.

To our knowledge, the only result where convergence towards equilibrium without the detailed-balance condition could be shown is \cite{FoM04}. The latter result considers the discrete version of \eqref{eq:coag:frag} in the regime of small total mass.

In contrast, in this work, we will provide convergence to equilibrium for \eqref{eq:coag:frag} for a class of rate kernels without satisfying a detailed-balance condition and for arbitrary mass. For this we rely on an idea developed in the context of the Boltzmann equation \cite{MiM09}. More precisely, we consider kernels $K_\eps=2+\eps W$ and $F=2+\eps V$ for small $\eps>0$ and follow a perturbative approach relying on the result of \cite{AiB79}. The procedure is similar to proving convergence to self-similarity for the pure coagulation equation in \cite{CaT21}.

\subsection{Notation and Definitions}

For $\alpha\geq 0$ we denote by $L^1_\alpha$ the weighted Lebesgue space with weight $(1+x)^{\alpha}$ on $(0,\infty)$, i.e.\@ $L_\alpha^1=L^1((1+x)^{\alpha}\dd{x})=\{f\in L^1(0,\infty)\,|\,\norm{f}_{L_\alpha^1}=\int_{0}^{\infty}(1+x)^{\alpha}\abs{f(x)}\dd{x}<\infty\}$. Moreover, for $\beta\in \R$ we denote by $M_\beta(g)$ the moment of order $\beta$ of a function $g$, i.e.\@ $M_\beta(g)=\int_{0}^{\infty}x^\beta g(x)\dd{x}$, provided the integral exists.

\begin{definition}\label{Def:mild:sol}
 For non-negative $f_0\in L_\alpha^1$ with $\alpha\geq 1$ we denote $f\in C([0,\infty),L_\alpha^1)$ a (mild) solution to \eqref{eq:coag:frag} if $f_t$ is non-negative for all $t\geq 0$ and 
 \begin{equation}\label{eq:mild:sol}
  f_t(\cdot)=f_0+\int_{0}^{t}\mathcal{C}_{K}(f_s,f_s)(\cdot)+\mathcal{F}_{F}(f_s)(\cdot)\dd{s}
 \end{equation}
where $f_s=f(s,\cdot)$.
\end{definition}
\begin{remark}
 It is well-known \cite{LaM02,EMR05} that mild solutions as given in Definition~\eqref{Def:mild:sol} are equivalent to weak solutions, i.e.\@
 \begin{equation*}
   \int_{0}^{\infty}\int_{0}^{\infty}f_t(x)\partial_t\varphi(t,x)-\bigl(\mathcal{C}_K(f_t,f_t)(x)-\mathcal{F}_{F}(f_t)(x)\bigr)\varphi(t,x)\dd{x}\dd{t}=\int_{0}^{\infty}f_0(x)\varphi(0,x)\dd{x}
 \end{equation*}
for all $\varphi\in C_c^1([0,\infty)\times (0,\infty))$ and renormalised solutions, i.e.\@
 \begin{equation}\label{eq:ren:sol}
   \frac{\dd}{\dd{t}}\int_{0}^{\infty}\beta(f_t)\varphi(x)\dd{x}=\int_{0}^{\infty}\bigl(\mathcal{C}_{K}(f_t,f_t)(x)+\mathcal{F}_{F}(f_t)(x)\bigr)\beta'(f_t)\varphi(x)\dd{x}
 \end{equation}
such that $f(0,\cdot)=f_0$ for all $\beta\in C^1(\R)\cap W^{1,\infty}(\R)$ and $\varphi\in L^{\infty}(0,\infty)$. In particular, choosing $\beta(x)=x$ we get
 \begin{equation}\label{eq:coag:frag:weak}
   \frac{\dd}{\dd{t}}\int_{0}^{\infty}f_t(x)\varphi(x)\dd{x}=\frac{1}{2}\int_{0}^{\infty}\bigl(K(x,y)f_t(x)f_t(y)-F(x,y)f_t(x+y)\bigr)\bigl[\varphi(x+y)-\varphi(x)-\varphi(y)\bigr]\dd{x}.
 \end{equation}
\end{remark}
We want to study situations where $K$ and $F$ do not necessarily satisfy a \emph{detailed-balance} relation. More precisely, we will consider perturbations of constant kernels $K_0=2$ and $F_0=2$ as follows
\begin{equation}\label{eq:ass:kernels}
 \begin{gathered}
   K_{\eps}(x,y)=2+\eps W(x,y)\qquad \text{and}\qquad F_{\eps}(x,y)=2+\eps V(x,y)\\
   \text{such that }\qquad 0\leq V(x,y)\leq \frac{1}{x+y} \quad \text{and} \quad 0\leq W(x,y)\leq 1.
 \end{gathered}
\end{equation}

\subsection{Operator bounds}

We collect some basic estimates on the operators $\mathcal{C}_{K}$ and $\mathcal{F}_{F}$. For bounded rate kernels $K$ we recall from \cite[Proposition~2.5]{CaT21} that $\mathcal{C}_K$ is a bounded bilinear form in the following sense.

\begin{proposition}\label{Prop:bound:CK}
 Let $K\colon (0,\infty)^2\to (0,\infty)$ be bounded. The operator $\mathcal{C}_K$ defined in \eqref{eq:coag:frag:op} is a continuous bilinear operator on $L_\alpha^1$ for each $\alpha\geq 0$ satisfying
 \begin{equation*}
  \norm{\mathcal{C}_{K}(f,g)}_{L^1_\alpha}\leq \frac{3}{2}\norm{K}_{L^\infty}\norm{f}_{L^1_\alpha}\norm{g}_{L^1_\alpha}
 \end{equation*}
  for all $f,g\in L^1_\alpha$.
\end{proposition}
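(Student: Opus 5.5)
We bound the three pieces of $\mathcal{C}_K(f,g)$ in \eqref{eq:coag:frag:op} separately in the weighted norm $\norm{\cdot}_{L^1_\alpha}$, using only $0<K\leq \norm{K}_{L^\infty}$ and Fubini--Tonelli. Bilinearity is immediate from the definition, and continuity follows from the norm bound. So the only task is to show that the gain term contributes at most $\tfrac12\norm{K}_{L^\infty}\norm{f}_{L^1_\alpha}\norm{g}_{L^1_\alpha}$ and each loss term contributes at most $\tfrac12\norm{K}_{L^\infty}\norm{f}_{L^1_\alpha}\norm{g}_{L^1_\alpha}$. Adding these gives the constant $\tfrac32$.

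For the gain term we write
\begin{equation*}
\int_0^\infty (1+x)^\alpha\frac12\int_0^x K(x-y,y)\abs{f(x-y)}\abs{g(y)}\dd{y}\dd{x}.
\end{equation*}
We substitute $x'=x-y$ and apply Tonelli, which turns it into
\begin{equation*}
\frac12\int_0^\infty\int_0^\infty K(x',y)(1+x'+y)^\alpha\abs{f(x')}\abs{g(y)}\dd{x'}\dd{y}.
\end{equation*}
The key elementary inequality is $(1+x'+y)^\alpha\leq (1+x')^\alpha(1+y)^\alpha$ for $\alpha\geq 0$, which holds because $1+x'+y\leq (1+x')(1+y)$. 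Bounding $K$ by its sup norm then yields the factor $\tfrac12\norm{K}_{L^\infty}\norm{f}_{L^1_\alpha}\norm{g}_{L^1_\alpha}$.

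For the loss terms we estimate
\begin{equation*}
\frac12\int_0^\infty(1+x)^\alpha\abs{f(x)}\int_0^\infty K(x,y)\abs{g(y)}\dd{y}\dd{x}\leq \frac12\norm{K}_{L^\infty}\norm{f}_{L^1_\alpha}\norm{g}_{L^1}.
\end{equation*}
Since $(1+y)^\alpha\geq 1$, we have $\norm{g}_{L^1}\leq \norm{g}_{L^1_\alpha}$. The other loss term is treated the same way with $f$ and $g$ interchanged.

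None of these steps is a real obstacle. The only point requiring care is the change of variables in the gain term together with the submultiplicativity of the weight $(1+x)^\alpha$. Measurability and absolute convergence are guaranteed by Tonelli applied to $\abs{f}$ and $\abs{g}$. This justifies that $\mathcal{C}_K(f,g)$ is well defined almost everywhere as an element of $L^1_\alpha$.
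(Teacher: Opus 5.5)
Your proof is correct. The gain term is handled by the substitution $x'=x-y$ together with $(1+x'+y)^\alpha\leq(1+x')^\alpha(1+y)^\alpha$, and each loss term by $\norm{g}_{L^1}\leq\norm{g}_{L^1_\alpha}$. This gives three contributions of at most $\frac{1}{2}\norm{K}_{L^\infty}\norm{f}_{L^1_\alpha}\norm{g}_{L^1_\alpha}$ each, hence exactly the constant $\frac{3}{2}$.

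The paper gives no proof of its own and simply quotes the result from \cite[Proposition~2.5]{CaT21}. Your argument is the standard direct one, following the same triangle-inequality-plus-Fubini scheme the paper uses for the fragmentation bound in Proposition~\ref{Prop:bound:FV}.
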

The following proposition provides an analogous statement for the fragmentation operator.
\begin{proposition}\label{Prop:bound:FV}
 Let $V$ satisfy $0\leq V(x,y)\leq \frac{\nu_*}{x+y}$. Then $\mathcal{F}_{V}$ defined in \eqref{eq:coag:frag:op} is a linear bounded operator on $L_\alpha^1$ for each $\alpha\geq 0$, i.e.\@ there exists a constant $C_V>0$ such that
  \begin{equation*}
  \norm{\mathcal{F}_{V}(f)}_{L^1_\alpha}\leq \frac{3\nu_*}{2}\norm{f}_{L^1_\alpha}
 \end{equation*}
 for all $f\in L^1_\alpha$.
\end{proposition}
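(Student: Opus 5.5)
We estimate the two terms of $\mathcal{F}_V(f)$ separately in $L^1_\alpha$, using the triangle inequality and Tonelli's theorem, with $\abs{f}$ in place of $f$ throughout. Write
\begin{equation*}
 \mathcal{F}_V(f)(x)=-\frac{1}{2}\Bigl(\int_0^x V(y,x-y)\dd{y}\Bigr)f(x)+\int_0^\infty V(x,y)f(x+y)\dd{y}=\vcc I_1(x)+I_2(x).
\end{equation*}

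\textbf{Loss term.} Since $V(y,x-y)\leq \nu_*/x$ for $y\in(0,x)$, we have $\int_0^x V(y,x-y)\dd{y}\leq \nu_*$. It follows that $\abs{I_1(x)}\leq \frac{\nu_*}{2}\abs{f(x)}$, and therefore $\norm{I_1}_{L^1_\alpha}\leq \frac{\nu_*}{2}\norm{f}_{L^1_\alpha}$.

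\textbf{Gain term.} Here we use
\begin{equation*}
 \int_0^\infty (1+x)^\alpha\abs{I_2(x)}\dd{x}\leq \int_0^\infty\int_0^\infty (1+x)^\alpha V(x,y)\abs{f(x+y)}\dd{y}\dd{x}.
\end{equation*}
Substitute $z=x+y$ for fixed $x$ and exchange the order of integration with Tonelli. This gives
\begin{equation*}
 \int_0^\infty \abs{f(z)}\int_0^z (1+x)^\alpha V(x,z-x)\dd{x}\dd{z}.
\end{equation*}
For $x\in(0,z)$ we have $(1+x)^\alpha\leq (1+z)^\alpha$, because $\alpha\geq 0$. We also have $V(x,z-x)\leq \nu_*/z$. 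Hence the inner integral is at most $(1+z)^\alpha\nu_*$, and so $\norm{I_2}_{L^1_\alpha}\leq \nu_*\norm{f}_{L^1_\alpha}$.

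Adding the two estimates yields the claimed bound $\frac{3\nu_*}{2}\norm{f}_{L^1_\alpha}$; in particular one may take $C_V=\frac{3\nu_*}{2}$. Linearity of $\mathcal{F}_V$ is immediate from its definition. The measurability and finiteness needed to apply Tonelli follow from $f\in L^1_\alpha$ together with the bound just derived.

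The only delicate point is the gain term. There the bound $V\leq \nu_*/(x+y)$ is exactly what compensates the length $z$ of the integration interval after the change of variables. Monotonicity of the weight $(1+x)^\alpha$ is what lets us transfer the weight from the fragment size $x$ to the parent size $z$. No other obstacle is expected.
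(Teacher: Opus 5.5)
Your proposal is correct and matches the paper's proof: the loss term is bounded by $\frac{\nu_*}{2}\norm{f}_{L^1_\alpha}$ via $V(y,x-y)\leq \nu_*/x$. The gain term is bounded by $\nu_*\norm{f}_{L^1_\alpha}$ after changing variables to the parent size and using that the weight $(1+x)^\alpha$ is monotone, and the two bounds add up to $\frac{3\nu_*}{2}\norm{f}_{L^1_\alpha}$.
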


\begin{proof}
 From the definition of $\mathcal{F}_{V}$ we immediately estimate
 \begin{multline*}
  \norm{\mathcal{F}_{F}(f)}_{L_\alpha^1}\leq \frac{1}{2}\int_{0}^{\infty}(1+x)^{\alpha}\int_{0}^{x}F(y,x-y)\dd{y}|f(x)|\dd{x}\\*
  +\int_{0}^{\infty}\int_{0}^{\infty}(1+x)^{\alpha} F(x,y)|f(x+y)|\dd{x}\dd{y}.
 \end{multline*}
 Rewriting the right-hand side by means of Fubini's theorem we can further estimate 
 \begin{multline*}
  \norm{\mathcal{F}_{F}(f)}_{L_\alpha^1}\leq \frac{\nu_*}{2}\int_{0}^{\infty}(1+x)^{\alpha}\int_{0}^{x}\frac{1}{x}\dd{y}|f(x)|\dd{x}+\nu_*\int_{0}^{\infty}\int_{0}^{x}\frac{(1+x-y)^{\alpha}}{x}|f(x)|\dd{x}\dd{y}\\*
  \leq \frac{\nu_*}{2}\int_{0}^{\infty}(1+x)^{\alpha}|f(x)|\dd{x}+\nu_*\int_{0}^{\infty}(1+x)^{\alpha}{x}|f(x)|\dd{y}\dd{x}.
 \end{multline*}
In the last step we also used that $(1+x-y)^\alpha\leq (1+x)^{\alpha}$ since $\alpha\geq0$. This then shows $\norm{\mathcal{F}_{F}(f)}_{L_\alpha^1}\leq\frac{3\nu_*}{2}\norm{f}_{L_\alpha^1}$.

\end{proof}

\subsection{Well-posedness of \eqref{eq:coag:frag} and stationary solutions}

There is a robust procedure available to prove the existence and uniqueness of solutions to \eqref{eq:coag:frag} which relies on weak-compactness arguments in $L^1$ spaces. In fact we have the following result.

\begin{theorem}\label{Thm:ex:sol}
 Let $K$ and $F$ satisfy $a_*\leq K(x,y)\leq A_*$ and $b_*\leq F(x,y)\leq B_*(1+\frac{1}{x+y})$ with constants $a_*,A_*,b_*,B_*>0$ and let $\alpha\geq 2$ and $f_0\in L_\alpha^1$. For each $\rho>0$ there exists a unique solution $f_t\in C([0,\infty),L^1_\alpha)$ to \eqref{eq:coag:frag}.
\end{theorem}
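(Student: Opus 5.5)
Since $K$ is bounded and $F$ has at most a $1/(x+y)$ singularity, Propositions~\ref{Prop:bound:CK} and \ref{Prop:bound:FV} make the right-hand side of \eqref{eq:mild:sol} locally Lipschitz on $L^1_\alpha$. The plan is therefore a Picard iteration in $C([0,T],L^1_\alpha)$ for local existence and uniqueness, followed by a priori bounds that rule out blow-up and allow continuation to all times.

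\emph{Local existence and uniqueness.} Write $F=F_b+F_s$ with $F_b=\min\{F,B_*\}$ bounded and $0\leq F_s\leq B_*/(x+y)$. Proposition~\ref{Prop:bound:CK} applies to $\mathcal{C}_K$. Proposition~\ref{Prop:bound:FV} with $\nu_*=B_*$ applies to $\mathcal{F}_{F_s}$. The bounded part $\mathcal{F}_{F_b}$ is estimated as in the proof of Proposition~\ref{Prop:bound:FV}: here $\int_0^x F_b\,\mathrm{d}y\leq B_*x$, so it costs one extra moment. To avoid this loss, I would not iterate the gain and loss terms together. Instead, the loss terms $-f(x)\int_0^\infty K(x,y)f(y)\,\mathrm{d}y$ and $-\tfrac12 f(x)\int_0^xF(y,x-y)\,\mathrm{d}y$ are treated as an integrating factor in an exponential (Duhamel) formulation. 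The gain terms are then bounded on $L^1_\alpha$: for fragmentation, $\int_0^\infty(1+x)^\alpha F(x,y)|f(x+y)|\,\mathrm{d}y$ integrates to at most $C\norm{f}_{L^1_{\alpha+1}}$, which again loses a moment. The cleanest route is therefore to work with non-negative solutions and use the monotone structure.

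\emph{Monotone construction via truncation.} Truncate $F^n=F\mathbf 1_{x+y\leq n}$ and $K^n=K\mathbf 1_{x+y\leq n}$. The truncated equation has bounded Lipschitz right-hand side on $L^1_\alpha$, so Banach's fixed point theorem gives a unique local solution. Non-negativity follows from the integrating-factor form, and mass conservation follows from \eqref{eq:coag:frag:weak} with $\varphi(x)=x$ (using $\varphi=x\mathbf 1_{x\leq n}$ so truncation preserves it), which gives global existence for each $n$.

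\emph{Uniform moment bounds and passage to the limit.} Uniform bounds follow from the weak form \eqref{eq:coag:frag:weak}:
\begin{itemize}[leftmargin=*]
\item $\varphi=1$ together with $K\geq a_*$ and the growth of $F$ bounds $M_0$;
\item $\varphi=x^\alpha$ bounds $M_\alpha$. The fragmentation contribution $-(x+y)^\alpha+x^\alpha+y^\alpha\leq0$ has a good sign, and the coagulation contribution is controlled by $C(M_1M_{\alpha-1}+\dots)$, yielding a Gronwall bound on compact time intervals;
\item the de la Vallée-Poussin criterion applied to $f_0$, together with a superlinear test function, gives uniform integrability.
\end{itemize}
Dunford--Pettis and equicontinuity in time then give a limit in $C([0,T],w\text{-}L^1)$ solving \eqref{eq:mild:sol}. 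Continuity with values in $L^1_\alpha$ is upgraded using the $\alpha$-moment equation. The condition $\alpha\geq2$ is used here: it gives a bound on $M_{\alpha}$ for test functions whose second derivative is controlled, and hence tightness at infinity.

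\emph{Uniqueness (main obstacle).} For two solutions $f,g$, set $h=f-g$ and test with $\varphi=\sgn(h)(1+x)$, via the renormalised formulation \eqref{eq:ren:sol}. Coagulation produces terms bounded by $C\norm{f+g}_{L^1_1}\norm{h}_{L^1_1}$, using $(1+x+y)\leq(1+x)(1+y)$. For fragmentation, the sign structure gives $(1+x+y)\sgn h(x+y)-\sgn h(x)(1+x)-\sgn h(y)(1+y)\geq -1\cdot$(bounded), weighted by $F|h(x+y)|$. The resulting integral $\int_0^{x+y}F\,\mathrm{d}y\lesssim B_*(x+1)$ is absorbed by the weight $(1+x)$. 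This closes to $\frac{\mathrm d}{\mathrm dt}\norm{h}_{L^1_1}\leq C(t)\norm{h}_{L^1_1}$ with $C(t)$ depending on $\norm{f}_{L^1_2}+\norm{g}_{L^1_2}$, which is where $\alpha\geq2$ enters. Gronwall then yields $h\equiv0$. The delicate step is precisely this fragmentation sign computation, making sure the singular part near the origin is controlled by the weight $1$ in $1+x$.
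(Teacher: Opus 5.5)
Your proposal is correct and follows essentially the route the paper intends when it defers to \cite{EMR05}: truncated approximations, moment bounds on bounded time intervals, uniform integrability via de la Vall\'ee-Poussin and Dunford--Pettis, and uniqueness from a Gronwall estimate on $\norm{f_t-g_t}_{L^1_1}$ tested against $\sgn(f_t-g_t)(1+x)$, where the singular part of $F$ is harmless since $\frac12\int_0^z F(y,z-y)\dd{y}\leq\frac{B_*}{2}(1+z)$. Only minor repairs are needed: drop the opening claim that the right-hand side is locally Lipschitz on $L^1_\alpha$ (the bounded part of $F$ costs a moment, as you note yourself); bound $M_0$ for the truncated problem by Gronwall rather than via $K\geq a_*$, since the truncation destroys that lower bound; and state the fragmentation sign inequality after multiplying the bracket by $\sgn h(x+y)$, which is what makes it $\geq -1$.
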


The proof follows immediately by adapting the arguments in e.g.\@ \cite{EMR05}. More precisely, the proof is essentially based on uniform moment estimates which we provide later in Section~\ref{Sec:moment:est}. Since the procedure is well established, we omit the proof and refer to \cite{EMR05} or \cite{Ban20} and the references therein. 

Moreover under suitable assumptions on $K$ and $F$, \eqref{eq:coag:frag} admits an equilibrium $Q$, i.e.\@ a stationary solution. In fact we have the following statement:

\begin{theorem}\label{Thm:ex:equil}
 Let $K$ and $F$ satisfy $a_*\leq K(x,y)\leq A_*$ and $b_*\leq F(x,y)\leq B_*(1+\frac{1}{x+y})$ with constants $a_*,A_*,b_*,B_*>0$. For each $\rho>0$ there exists a stationary solution $Q$ to \eqref{eq:coag:frag} such that $Q\in L^1_\alpha$ for all $\alpha\geq 1$ and $\int_{0}^{\infty}xQ(x)\dd{x}=\rho$.
\end{theorem}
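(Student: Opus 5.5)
We obtain $Q$ as a fixed point of the solution semigroup $S_t$ provided by Theorem~\ref{Thm:ex:sol}. The tool is the Schauder--Tychonoff type result that a continuous semigroup leaving a nonempty convex, weakly compact subset of $L^1$ invariant has a common fixed point; equivalently, for each $\tau>0$ the map $S_\tau$ has a fixed point, and a diagonal argument over dyadic $\tau$ gives a stationary point. We work on
\begin{equation*}
 \mathcal{K}=\Bigl\{f\in L^1_1:\ f\geq 0,\ M_1(f)=\rho,\ M_0(f)\geq m_*,\ M_\alpha(f)\leq C_\alpha,\ \int_0^\infty f\ln f\,\dd{x}\leq C_{\ln}\Bigr\}
\end{equation*}
for a suitable $\alpha\geq 2$. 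This set is convex, since the constraints are linear or convex. It is weakly compact in $L^1_1$ by Dunford--Pettis: the bound on $M_\alpha$ gives tightness and control of the first moment at infinity, and the entropy bound gives uniform integrability. Weak continuity of $S_\tau$ on $\mathcal{K}$ follows from the stability estimates behind uniqueness in Theorem~\ref{Thm:ex:sol}, combined with Proposition~\ref{Prop:bound:CK} and Proposition~\ref{Prop:bound:FV}.

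The main work is to show that $\mathcal{K}$ is invariant, i.e.\ to prove a priori bounds that propagate. Mass conservation holds because $K$ is bounded and $F\lesssim 1+1/(x+y)$ allows $\varphi(x)=x$.

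\emph{Higher moments.} Test \eqref{eq:coag:frag:weak} with $\varphi(x)=x^\alpha$, $\alpha>1$. Coagulation contributes at most $\tfrac{A_*}{2}\int\!\!\int ff[(x+y)^\alpha-x^\alpha-y^\alpha]$. This term is controlled by lower moments through the standard inequality $(x+y)^\alpha-x^\alpha-y^\alpha\leq C(x^{\alpha-1}y+xy^{\alpha-1})$, with interpolation when $\alpha\leq 2$. Fragmentation contributes
\begin{equation*}
 -\frac{1}{2}\int\!\!\int F(x,y)f(x+y)\bigl[(x+y)^\alpha-x^\alpha-y^\alpha\bigr]\leq -c\,b_*\,M_{\alpha+1}(f).
\end{equation*}
By interpolation this yields $\frac{\dd}{\dd t}M_\alpha\leq C-cM_\alpha^{1+1/(\alpha-1)}$, so $M_\alpha\leq C_\alpha$ is invariant for $C_\alpha$ large. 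We do this first for $\alpha=2$ and then inductively for larger $\alpha$.

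\emph{Lower bound on $M_0$.} Test with $\varphi=1$:
\begin{equation*}
 \frac{\dd}{\dd t}M_0=-\tfrac12\int\!\!\int Kff+\tfrac12\int\!\!\int Ff(x+y)\geq -\tfrac{A_*}{2}M_0^2+\tfrac{b_*}{2}\rho.
\end{equation*}
Hence $M_0\geq m_*$ is invariant for $m_*\leq\sqrt{b_*\rho/A_*}$; it excludes the zero mass concentrating at the origin.

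\emph{Entropy.} Use the renormalised formulation \eqref{eq:ren:sol} with $\beta\approx s\ln s$. The loss terms are favourable. The gain terms are estimated using $F\leq B_*(1+1/(x+y))$: the singular part $\int_0^\infty f(x+y)\,\dd{y}/(x+y)\leq \int_x^\infty f(z)/z\,\dd{z}$ has to be bounded in $L^\infty$-like fashion near $x=0$. I expect this to be the main obstacle. The gain term may produce an $f(x)\ln f(x)$ contribution that is not uniformly integrable near $0$. The first thing to try is to compare gain with loss via the inequality $a\ln b\leq a\ln a + b - a$. If the entropy bound does not propagate, the fallback is to replace it by a set defined through a weighted $L^1$ modulus of uniform integrability built from a convex superlinear $\Phi$ (de la Vallée-Poussin), for which the dissipation $-\tfrac12\int_0^x F\,\dd{y}\,f\geq -cxf$ helps.

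With invariance established, the fixed point theorem gives $Q\in\mathcal{K}$ with $S_tQ=Q$ for all $t$, i.e.\ a stationary solution with $M_1(Q)=\rho$. It remains to show $Q\in L^1_\alpha$ for every $\alpha\geq 1$. Since $Q$ is stationary, the moment inequality gives $0=\frac{\dd}{\dd t}M_\alpha(Q)\leq C-cM_{\alpha+1}(Q)$. Hence all moments of $Q$ are finite, by induction on $\alpha$, provided we justify this through a truncation $\varphi=\min(x,R)^\alpha$ and let $R\to\infty$.
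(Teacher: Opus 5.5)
Your route is the one the paper indicates: the fixed-point theorem of \cite{EMR05} (Theorem~1.2 there), applied to the flow of \eqref{eq:coag:frag} on an invariant, convex, weakly compact subset of $L^1_1$. Your treatment of mass conservation, the moment bounds, the lower bound on $M_0$ and the final bootstrap of the moments of $Q$ is fine.

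The genuine gap lies in the only non-routine step, which the paper takes from \cite{EMR05}. You never establish that a uniform-integrability constraint is invariant under the flow, so you have not produced a weakly compact invariant set. The tools you list do not close this:
\begin{itemize}
\item Pairing $a\ln b\leq a\ln a+b-a$ with $a=f(x+y)$, $b=f(x)$ in the fragmentation gain produces $\iint F(x,y)f(x)\,\mathrm{d}x\,\mathrm{d}y$. This is infinite because $F\geq b_*$.
\item The fragmentation loss rate $\frac12\int_0^xF(y,x-y)\,\mathrm{d}y$ is only bounded below by $\frac{b_*}{2}x$, and your inequality $\geq -cxf$ points the wrong way. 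So it gives no dissipation near $x=0$, exactly where you located the difficulty.
\item Suppose you absorb the coagulation gain into the coagulation loss via $a\ln(b/a)\leq b-a$ with $a=f(x)f(y)$, $b=f(x+y)$. Then already for bounded $F$ you only get $\frac{\mathrm{d}}{\mathrm{d}t}\int f\ln f\leq C-\frac{b_*}{2}\int xf\ln_+f$. The right-hand side has no dissipation for concentrations at small sizes, so this allows linear growth in time and cannot define an invariant set.
\end{itemize}

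What is missing is a mechanism that controls concentration near $x=0$ uniformly in time. The fragmentation loss cannot supply it. The coagulation loss $f(x)\int_0^\infty K(x,y)f(y)\,\mathrm{d}y\geq a_*M_0(f)f(x)$ could, and this is where a lower bound on $M_0$ would really be used. Non-triviality of $Q$, your stated reason for that constraint, is already guaranteed by $M_1(Q)=\rho$, since $M_1$ is weakly continuous on $L^1_1$. However, in the computation above this loss has been spent on the coagulation gain, and it would still have to be played off against the singular fragmentation gain $\int_x^\infty f(z)z^{-1}\,\mathrm{d}z$. Supplying such an estimate, or an alternative invariant modulus of integrability, is the real content of this step.

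Two smaller points. First, weak sequential continuity of $S_\tau$ does not follow from norm stability estimates like Proposition~\ref{Prop:stability:2}. It needs three ingredients: compactness of the trajectories in $C([0,\tau];L^1_1\text{-weak})$; passage to the limit in the weak formulation, using $f_n\otimes f_n\rightharpoonup f\otimes f$ in $L^1$ and boundedness of $K$; and uniqueness to identify the limit. Second, in the moment bootstrap use a superadditive truncation such as $x\min(x,R)^{\alpha-1}$. For $\min(x,R)^\alpha$ the increment $\varphi(x+y)-\varphi(x)-\varphi(y)$ is negative when $x,y<R<x+y$ and $x^\alpha+y^\alpha>R^\alpha$.
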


Again, this result is a small variation of \cite[Theorem~4.1]{EMR05} and we therefore omit the proof since it follows the same lines (see also \cite{Ban20}). The general idea consists in applying a fixed-point argument \cite[Theorem~1.2]{EMR05}: more precisely, one can show that the flow generated by \eqref{eq:coag:frag} is weakly sequentially continuous and leaves a suitable convex and sequentially compact subset of $L^1_\alpha$ invariant which guarantees the existence of a fixed point, i.e.\@ a stationary solution.

\begin{remark}
 Note that Theorem~\ref{Thm:ex:equil} does not ensure that the given equilibrium is unique. However, for the specific class of kernels considered in this work, we will show that \eqref{eq:coag:frag} admits in fact only one equilibrium (see Theorem~\ref{Thm:uniqueness:equi} below).
\end{remark}

\subsection{The result for constant kernels by Aizenmann-Bak}

It is well-known that coagulation-fragmentation equations with constant kernels allow for relatively explicit computations. In particular, it is possible to precisely characterise the long-time behaviour of \eqref{eq:coag:frag} relying on an entropy functional. This analysis goes essentially back to Aizenmann and Bak \cite{AiB79} (see also \cite{Ban20}) and can be summarised as follows:

\begin{theorem}\label{Thm:AB}
 Let $\rho>0$ and $K=F=2$ and let $f_0\in L_1^1$  such that $f_0\log (f_0)\in L^1$ and $\rho=M_1(f_0)$. There exists a constant $C>0$ which depends on $\rho$ and increasingly on $\abs[\big]{\int_0^\infty f_0\log(f_0)\dd{x}}$ and $M_0(f_0)$ such that 
 \begin{equation*}
  \norm{f_t-\ee^{-\frac{x}{\sqrt{\rho}}}}_{L^1}\leq C \ee^{-\min\{M_0(f_0),\sqrt{\rho}\} t} \qquad \text{for all } t\geq 0
 \end{equation*}
 and all solutions $f_t$ of \eqref{eq:coag:frag}.
\end{theorem}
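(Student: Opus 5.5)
We prove Theorem~\ref{Thm:AB} by the classical Aizenman--Bak argument. It combines moment identities, which are explicit for constant kernels, with an entropy-dissipation inequality relative to the equilibrium $Q_\rho(x)=\ee^{-x/\sqrt{\rho}}$.

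\emph{Moments.} Choosing $\varphi\equiv 1$ in \eqref{eq:coag:frag:weak} with $K=F=2$ gives
\begin{equation*}
\frac{\dd}{\dd{t}}M_0(f_t)=-M_0(f_t)^2+\rho ,
\end{equation*}
since the fragmentation part yields $\int_0^\infty\int_0^\infty f_t(x+y)\dd{x}\dd{y}=M_1(f_t)=\rho$. Mass conservation ($\varphi(x)=x$) gives $M_1(f_t)=\rho$ for all $t\geq0$. This Riccati equation can be solved explicitly. It shows that $M_0(f_t)\to\sqrt{\rho}$ exponentially, with rate governed by $M_0(f_0)+\sqrt{\rho}\geq\min\{M_0(f_0),\sqrt\rho\}$. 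In addition, $M_0(f_t)$ stays between $\min\{M_0(f_0),\sqrt\rho\}$ and $\max\{M_0(f_0),\sqrt\rho\}$.

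\emph{Entropy.} $Q_\rho$ satisfies detailed balance for $K=F=2$, because $Q_\rho(x)Q_\rho(y)=Q_\rho(x+y)$. Using the renormalised formulation \eqref{eq:ren:sol} with $\beta=\log$ (after truncation) we consider
\begin{equation*}
H(t)=\int_0^\infty f_t\bigl(\log(f_t/Q_\rho)-1\bigr)\dd{x}+M_0(f_t).
\end{equation*}
Its dissipation is
\begin{equation*}
D(f)=\int_0^\infty\int_0^\infty\bigl(f(x)f(y)-f(x+y)\bigr)\log\frac{f(x)f(y)}{f(x+y)}\dd{x}\dd{y}\geq0 .
\end{equation*}
We compare $f_t$ with the exponential $g_t(x)=M_0(f_t)^2/\rho\cdot\ee^{-xM_0(f_t)/\rho}$, which has the same zeroth and first moments. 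The relative entropy then splits as
\begin{equation*}
H(f_t\,|\,Q_\rho)=H(f_t\,|\,g_t)+\bigl(\text{explicit function of }M_0(f_t)\bigr).
\end{equation*}
The second term decays exponentially by the moment step.

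\emph{Main obstacle.} The key step, and the one we expect to be hardest, is an entropy–entropy-dissipation inequality of the form
\begin{equation*}
D(f)\geq c\,H(f\,|\,g)-(\text{moment defect}),
\end{equation*}
with $c$ essentially $\min\{M_0(f),\sqrt\rho\}$. We first try the elementary inequality $(a-b)\log(a/b)\geq \ldots$ together with convexity: exponentials are exactly the functions with $f(x)f(y)=f(x+y)$. Using the one-dimensional structure, we then bound $D(f)$ from below by $M_0(f)\int f\log(f/g)\dd{x}$ up to terms controlled by $\abs{M_0(f)^2-\rho}$. This is where the dependence of $C$ on $\abs{\int f_0\log f_0\dd{x}}$ and $M_0(f_0)$ enters; the entropy of $f_0$ gives the initial value of $H$.

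\emph{Conclusion.} Gronwall's inequality then gives exponential decay of $H(f_t\,|\,Q_\rho)$ at rate $\min\{M_0(f_0),\sqrt\rho\}$ (a rate loss of a factor $2$ can be absorbed by adjusting constants). The Csiszár–Kullback–Pinsker inequality converts this into $L^1$ decay. Taking square roots halves the rate, and this halving must be compensated by proving the entropy inequality with constant $2\min\{\cdot\}$, or equivalently by choosing the rate conventions accordingly.
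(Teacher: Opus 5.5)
The paper gives no proof of this theorem; it quotes it from Aizenman--Bak \cite{AiB79} (see also \cite{Ban20}). That argument is the entropy method you outline: the Riccati equation for $M_0$, relative entropy with respect to $Q=\ee^{-x/\sqrt\rho}$, an entropy/entropy-dissipation inequality, and Csisz\'ar--Kullback--Pinsker. The genuine gap is that you only announce the core step, the functional inequality (``we first try \dots, we then bound \dots''), and give no mechanism that would produce it.

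The missing idea is a reduction to one variable. $\Phi(a,b)=(a-b)\log(a/b)$ is jointly convex and $1$-homogeneous (apply the log-sum inequality to $a\log(a/b)$ and $b\log(b/a)$ separately). Integrate in $y$ at fixed $x$, using $\int_0^\infty f(x)f(y)\dd{y}=Nf(x)$ with $N=M_0(f)$ and $\int_0^\infty f(x+y)\dd{y}=F(x)=\int_x^\infty f(z)\dd{z}$. This gives
\begin{equation*}
 D(f)\geq\int_0^\infty\Phi\bigl(Nf,F\bigr)\dd{x}\geq\int_0^\infty\Bigl(Nf\log\frac{Nf}{F}-Nf+F\Bigr)\dd{x}=N\int_0^\infty f\log f\dd{x}+\rho ,
\end{equation*}
where you use $\Phi(a,b)\geq a\log(a/b)-a+b$, $\int_0^\infty F\dd{x}=\rho$, and $\int_0^\infty f\log F\dd{x}=N\log N-N$ (since $f=-F'$). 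Set $\mathcal{H}_Q(f)=\int_0^\infty\bigl(f\log(f/Q)-f+Q\bigr)\dd{x}=\int_0^\infty f\log f\dd{x}-N+2\sqrt\rho$. The inequality then reads $D(f)\geq N\mathcal{H}_Q(f)+(N-\sqrt\rho)^2$.

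No comparison function $g_t$ and no moment defect are needed. Your target $D\geq M_0\int f\log(f/g)$ also follows, because $\mathcal{H}_Q(f)=\int f\log(f/g)+2\sqrt\rho\,\phi(N/\sqrt\rho)$ with $\phi(s)=s\log s-s+1\geq 0$. Since $\frac{\dd}{\dd{t}}\mathcal{H}_Q(f_t)=-D(f_t)$ and $M_0(f_t)\geq\min\{M_0(f_0),\sqrt\rho\}$, Gronwall gives $\mathcal{H}_Q(f_t)\leq\mathcal{H}_Q(f_0)\ee^{-\min\{M_0(f_0),\sqrt\rho\}t}$.

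Two smaller slips. Your $H(t)$ equals $\int f\log(f/Q)\dd{x}$, whose derivative is $-D+\rho-M_0^2$, not $-D$. The renormalisation you need is $\beta(r)=r\log r-r$, not $\beta=\log$.

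Your conclusion also does not deliver the stated rate. A factor $2$ lost in an exponent cannot be absorbed into multiplicative constants, and ``choosing the rate conventions accordingly'' is not an argument. Csisz\'ar--Kullback--Pinsker, in its form for non-normalised densities with a constant controlled by $\max\{M_0(f_0),\sqrt\rho\}$, turns the entropy bound into $\norm{f_t-Q}_{L^1}\leq C\ee^{-\frac12\min\{M_0(f_0),\sqrt\rho\}t}$. That is half the exponent in the statement, though $C$ does depend on $\rho$, $M_0(f_0)$ and $\int f_0\log f_0$ as required. Recovering the full exponent would need something like $D\geq 2\min\{M_0,\sqrt\rho\}\,\mathcal{H}_Q$, or a further argument near equilibrium. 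You note that this is needed but supply neither. So even with the inequality above filled in, your route establishes the theorem only with rate $\frac12\min\{M_0(f_0),\sqrt\rho\}$.
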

The convergence can be extended to $L_\alpha^1$ by means of a simple interpolation argument using Hölder's inequality together with uniform moment estimates (see Proposition~\ref{Prop:moments:evolution}). In fact, we have the following result.
\begin{theorem}\label{Thm:AB:weight}
 Let $\rho>0$, $K=F=2$ and $f_0$ be as in Theorem~\ref{Thm:AB}. Assume in addition that $f_0\in L_{\alpha_*}^1$ with some $\alpha_*>1$. There exists a constant $C>0$ depending increasingly on $\abs[\big]{\int_0^\infty f_0\log(f_0)\dd{x}}$, $M_0(f_0)$ and $\norm{f_0}_{L_{\alpha_*}^1}$ such that each solution $f_t$ to \eqref{eq:coag:frag} satisfies 
 \begin{equation*}
  \norm{f_t-\ee^{-\frac{x}{\sqrt{\rho}}}}_{L^1_{\alpha}}\leq C \ee^{-\frac{\alpha_*-\alpha}{\alpha_*}\min\{M_0(f_0),\sqrt{\rho}\} t} \qquad \text{for all } t\geq 0
 \end{equation*}
 for all $\alpha\in (1,\alpha_*)$.
\end{theorem}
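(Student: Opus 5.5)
The plan is to interpolate between the exponentially decaying $L^1$ estimate of Theorem~\ref{Thm:AB} and a uniform-in-time bound in the stronger space $L^1_{\alpha_*}$, using Hölder's inequality. Write $Q(x)=\ee^{-x/\sqrt{\rho}}$ and $h_t=f_t-Q$. The first ingredient is a uniform moment bound,
\begin{equation*}
 \sup_{t\geq 0}\norm{f_t}_{L^1_{\alpha_*}}\leq C_*,
\end{equation*}
with $C_*$ depending increasingly on $\norm{f_0}_{L^1_{\alpha_*}}$ and on $\rho$. This is exactly the uniform moment estimate announced as Proposition~\ref{Prop:moments:evolution}, and I will take it as given. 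If it had to be proved here, one would test \eqref{eq:coag:frag:weak} with $\varphi(x)=x^{\alpha_*}$ for $K=F=2$. The coagulation part then produces $\int\int f_tf_t[(x+y)^{\alpha_*}-x^{\alpha_*}-y^{\alpha_*}]$, which is controlled by lower moments via the usual binomial-type inequalities. The fragmentation part gives $-2\int_0^\infty x^{\alpha_*+1}f_t\,\frac{\alpha_*-1}{\alpha_*+1}\dd{x}$, a strictly dissipative higher-order term. Interpolating the lower moments against $M_1=\rho$ and $M_{\alpha_*+1}$ then yields a differential inequality $\frac{\dd}{\dd t}M_{\alpha_*}\leq C-cM_{\alpha_*}$, which gives the bound. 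Since $Q$ has all moments finite, this also bounds $\sup_t\norm{h_t}_{L^1_{\alpha_*}}\leq C_*+\norm{Q}_{L^1_{\alpha_*}}$.

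Next comes the interpolation step. For $\alpha\in(1,\alpha_*)$, set $\theta=\alpha/\alpha_*\in(0,1)$ and write $(1+x)^\alpha\abs{h_t}=\abs{h_t}^{1-\theta}\bigl((1+x)^{\alpha_*}\abs{h_t}\bigr)^{\theta}$. Hölder's inequality with exponents $1/(1-\theta)$ and $1/\theta$ then gives
\begin{equation*}
 \norm{h_t}_{L^1_\alpha}\leq \norm{h_t}_{L^1}^{\frac{\alpha_*-\alpha}{\alpha_*}}\,\norm{h_t}_{L^1_{\alpha_*}}^{\frac{\alpha}{\alpha_*}}.
\end{equation*}
Inserting Theorem~\ref{Thm:AB} for the first factor and the uniform bound for the second yields
\begin{equation*}
 \norm{h_t}_{L^1_\alpha}\leq C^{\frac{\alpha_*-\alpha}{\alpha_*}}\bigl(C_*+\norm{Q}_{L^1_{\alpha_*}}\bigr)^{\frac{\alpha}{\alpha_*}}\ee^{-\frac{\alpha_*-\alpha}{\alpha_*}\min\{M_0(f_0),\sqrt{\rho}\}t},
\end{equation*}
which is exactly the claimed rate. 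The resulting constant is increasing in the stated quantities.

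The constants need some care. Because $\theta\leq 1$ and the factors may be larger or smaller than one, I will bound them by their maximum with $1$ so that the final constant does not depend on $\alpha$. The dependence on $\rho$ can be absorbed because $\rho=M_1(f_0)\leq\norm{f_0}_{L^1_{\alpha_*}}$.

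I expect the main obstacle to be the uniform-in-time moment bound for a possibly non-integer $\alpha_*\in(1,2)$. There the fragmentation dissipation must be shown to dominate the coagulation growth without using moments above $\alpha_*+1$ that are not a priori finite. I would handle this by first working with truncated test functions $\min\{x,R\}^{\alpha_*}$, which is admissible since \eqref{eq:ren:sol} allows bounded $\varphi$, and then letting $R\to\infty$ via monotone convergence. In any case this step is supplied by Proposition~\ref{Prop:moments:evolution}.
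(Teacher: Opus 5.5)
Your proposal is correct and is the same argument the paper sketches: Hölder interpolation $\norm{h_t}_{L^1_\alpha}\leq \norm{h_t}_{L^1}^{(\alpha_*-\alpha)/\alpha_*}\norm{h_t}_{L^1_{\alpha_*}}^{\alpha/\alpha_*}$, combining the $L^1$ decay of Theorem~\ref{Thm:AB} with the uniform-in-time $L^1_{\alpha_*}$ bound from Proposition~\ref{Prop:moments:evolution}, which gives exactly the stated rate. There is one harmless slip in your side sketch, which you do not actually use: with $F=2$ the fragmentation contribution to $\frac{\dd}{\dd{t}}M_{\alpha_*}$ is $-\frac{\alpha_*-1}{\alpha_*+1}M_{\alpha_*+1}$, without the factor $2$.
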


Moreover, we have a similar result for the linearisation $\LL_0$ of \eqref{eq:coag:frag} with constant kernels around the equilibrium $Q(x)=\ee^{-x/\sqrt{\rho}}$ which is given by
\begin{equation*}
\LL_0 h=2\mathcal{C}_{2}(Q,h)+\mathcal{F}_{2}(h).
\end{equation*}
In \cite{AiB79} a spectral gap estimate for $\LL_0$ in $L^2$ is provided. Via the Laplace transform the linearised equation can be converted into an explicitly solvable (non-local) ODE which yields an explicit formula for the corresponding semi-group generated by $\LL_0$ in $L_\alpha^1$ \cite{LaS08}. In fact, relying on this formula one obtains the following result:

\begin{proposition}[Spectral gap for $\LL_0$]\label{Prop:spectral:gap:L0}
 For each $\alpha\geq 1$, the linear operator $\LL_0$ generates a strongly continuous semi-group on $L_\alpha^1\cap\{f|\int_{0}^{\infty}xf(x)\dd{x}=0\}$ such that
 \begin{equation*}
  \norm{\ee^{\LL_0 t}h}_{L_\alpha^1}\leq C \norm{h}_{L_\alpha^1}\ee^{-2\sqrt{\rho} t} \qquad \text{for all } t\geq 0.
 \end{equation*}
 In particular, $\LL_0$ is invertible on $L_\alpha^1\cap\{f|\int_{0}^{\infty}xf(x)\dd{x}=0\}$ and $\norm{\LL_0^{-1}h}_{L_\alpha^1}\leq \frac{C}{2\sqrt{\rho}}\norm{h}_{L_\alpha^1}$.
\end{proposition}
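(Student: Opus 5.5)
The approach is to compute the semigroup essentially explicitly via the Laplace transform and then read off the decay in $L^1_\alpha$.

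\textbf{Setup and first reduction.} Write $\lambda=1/\sqrt{\rho}$, so that $Q(x)=\ee^{-\lambda x}$, $M_0(Q)=\sqrt{\rho}$ and $M_1(Q)=\rho$. For $K=F=2$ the operator takes the form
\begin{equation*}
 \LL_0 h(x)=2\int_0^x Q(x-y)h(y)\dd{y}-2Q(x)M_0(h)-2\sqrt{\rho}\,h(x)-xh(x)+2\int_x^\infty h(y)\dd{y}.
\end{equation*}
By Propositions~\ref{Prop:bound:CK} and~\ref{Prop:bound:FV}, the part $h\mapsto 2\mathcal{C}_2(Q,h)$ is bounded on $L^1_\alpha$. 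The term $-xh$ generates the explicit contraction semigroup $h\mapsto \ee^{-xt}h$. The remaining fragmentation gain term, $2\int_x^\infty h$, is bounded from $L^1_{\alpha+1}$ into $L^1_\alpha$; it is also positive and mass-preserving together with $-xh$. The plan is therefore to construct the semigroup as a perturbation of the contraction $\ee^{-(x+2\sqrt{\rho})t}$ via Duhamel's formula. I would confirm that the result is strongly continuous on $L^1_\alpha$ only after the explicit formula below is available.

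\textbf{Conserved and decaying moments.} Testing \eqref{eq:coag:frag:op:weak} with $\varphi(x)=x$ shows that $M_1(\ee^{\LL_0t}h)$ is conserved, so the subspace $\{M_1=0\}$ is invariant. Testing with $\varphi=1$ gives two contributions:
\begin{itemize}
 \item the coagulation part contributes $-2M_0(Q)M_0(h)=-2\sqrt{\rho}M_0(h)$;
 \item the fragmentation part contributes $M_1(h)$, which vanishes on the subspace.
\end{itemize}
Hence $M_0(h_t)=\ee^{-2\sqrt{\rho}t}M_0(h_0)$. This is the key point, because it removes the only nonlocal quantity from the transformed equation.

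\textbf{Laplace transform.} Let $H(t,p)=\int_0^\infty \ee^{-px}h_t(x)\dd{x}$. Using $H(t,0)=M_0(h_t)$, the transformed equation reads
\begin{equation*}
 \partial_tH=\partial_pH+\Bigl(\frac{2}{p+\lambda}-\frac{2}{p}-2\sqrt{\rho}\Bigr)H+2M_0(h_0)\ee^{-2\sqrt{\rho}t}\Bigl(\frac1p-\frac{1}{p+\lambda}\right.\kern-\nulldelimiterspace\Bigr).
\end{equation*}
This is a linear transport equation along the characteristics $p\mapsto p+t$, so it can be integrated explicitly. The integrating factor is
\begin{equation*}
 \exp\Bigl(-2\sqrt{\rho}\,t\Bigr)\,\frac{p^2\,(p+t+\lambda)^2}{(p+t)^2\,(p+\lambda)^2},
\end{equation*}
up to checking the exact arrangement of factors. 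Inverting the transform term by term should give a representation of the form
\begin{equation*}
 h_t=\ee^{-(x+2\sqrt{\rho})t}h_0+\ee^{-2\sqrt{\rho}t}R_t h_0 .
\end{equation*}
Here $R_t$ is an integral operator whose kernel is built from $\ee^{-\lambda x}$, $\ee^{-tx}$ and polynomial factors in $t$ and $x$; the $M_0$ terms enter only through $M_0(h_0)$ and $M_1(h_0)=0$. This representation is the formula referred to from \cite{LaS08}. Strong continuity follows directly from it.

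\textbf{Main obstacle: uniform bound on $R_t$.} The hardest step is to show $\norm{R_th}_{L^1_\alpha}\le C\norm{h}_{L^1_\alpha}$ uniformly in $t$. A priori the polynomial-in-$t$ factors could spoil the rate $2\sqrt{\rho}$. This is where the singularity of $\frac{2}{p}$ at $p=0$ matters. To control it I would first rewrite the equation in terms of $H(t,p)-H(t,0)-pH_p(t,0)$, exploiting $M_1=0$, so that the $1/p$ and $1/p^2$ singularities cancel. I would then verify, kernel by kernel, that every term carries either a factor $\ee^{-tx}$ or a $t$-uniformly integrable profile such as $\lambda^2\ee^{-\lambda x}$ times bounded coefficients. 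If this cancellation fails, I would accept a slightly smaller rate, though the stated rate $2\sqrt{\rho}$ should come out exactly from the factor $\ee^{-2\sqrt{\rho}t}$.

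\textbf{Invertibility.} Once the decay estimate holds, $\LL_0^{-1}h=-\int_0^\infty \ee^{\LL_0t}h\dd{t}$ converges absolutely in $L^1_\alpha$. This gives invertibility on the mass-zero subspace with $\norm{\LL_0^{-1}h}_{L^1_\alpha}\le \frac{C}{2\sqrt{\rho}}\norm{h}_{L^1_\alpha}$.
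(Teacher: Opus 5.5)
Your route is the one the paper indicates. The paper gives no argument beyond citing the explicit Laplace-transform formula of \cite{LaS08}. Your computations are correct: the form of $\LL_0$, the transformed equation, the integrating factor, and $M_0(h_t)=\ee^{-2\sqrt{\rho}t}M_0(h_0)$ on $\{M_1=0\}$. Integrating along characteristics gives
\begin{equation*}
 \ee^{2\sqrt{\rho}t}H(t,p)=\frac{p^2}{(p+\lambda)^2}\Bigl(1+\frac{\lambda}{p+t}\Bigr)^2H(0,p+t)+\frac{2\lambda M_0(h_0)}{(p+\lambda)^2}\Bigl(p+\frac{\lambda}{2}-\frac{p^2}{p+t}-\frac{\lambda p^2}{2(p+t)^2}\Bigr).
\end{equation*}

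The gap is in the step you leave open. You locate the difficulty in the wrong place, and the criterion you propose would not deliver the bound.

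Nothing grows in $t$. In $x$-space the terms are
\begin{itemize}
 \item $\mu*(\ee^{-tx}g)$ with $g\in\{h_0,\int_0^x h_0,\int_0^x(x-y)h_0(y)\dd{y}\}$;
 \item $M_0(h_0)\,\mu*\ee^{-tx}$ and $M_0(h_0)\,\mu*(x\ee^{-tx})$;
 \item $M_0(h_0)\psi$.
\end{itemize}
Here $\mu=\delta-2\lambda\ee^{-\lambda x}+\lambda^2x\ee^{-\lambda x}$ inverts $p^2/(p+\lambda)^2$, and $\psi(x)=\lambda(2-\lambda x)\ee^{-\lambda x}$. For $t\geq1$ every term is bounded by $C\norm{h_0}_{L^1_\alpha}$. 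This holds simply because $\mu$ has finite weighted total variation and $\int_0^\infty(1+x)^{\alpha+1}\ee^{-tx}\dd{x}\leq C$. So the rate $2\sqrt{\rho}$ comes out exactly, with no cancellation needed. The rate is also sharp, since $M_1(\psi)=0$ and $\LL_0\psi=-2\sqrt{\rho}\,\psi$. So ``accepting a slightly smaller rate'' would not prove the statement.

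The real non-uniformity is as $t\to0$. It comes from the poles at $p=-t$, not from the point $p=0$, which the factor $p^2$ already neutralises. These poles produce tails of length $1/t$; for example, $\norm{\ee^{-tx}\int_0^x h_0}_{L^1_\alpha}$ grows like $\abs{M_0(h_0)}\,t^{-1-\alpha}$. Hence ``every term carries a factor $\ee^{-tx}$'' gives no uniform bound near $t=0$. For the same reason, strong continuity does not follow directly from the representation.

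Your proposed pairing of homogeneous and source terms (via $M_0(h_0)$ and $M_1(h_0)=0$) turns these pieces into $-\ee^{-tx}\int_x^\infty h_0$ and $\ee^{-tx}\int_x^\infty(y-x)h_0(y)\dd{y}$. These are still not bounded by $C\norm{h_0}_{L^1_\alpha}$ uniformly as $t\to0$; for the first, $t^{-1}\norm{h_0}_{L^1_\alpha}$ is the best one gets. A further integration by parts, exploiting that $\mu$ has vanishing mass and first moment, would be needed.

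The simple repair is to split time.
\begin{itemize}
 \item On $[0,1]$, use the a priori bound $\frac{\dd}{\dd{t}}\norm{h_t}_{L^1_\alpha}\leq C\norm{h_t}_{L^1_\alpha}$, proved as in \eqref{eq:stab:4}. Test with $\sgn(h_t)(1+x)^\alpha$ and control the fragmentation part by $\norm{h_t}_{L^1_1}$ via $(1+x)^\alpha+(1+y)^\alpha-(1+x+y)^\alpha\leq 1$. Control $2\mathcal{C}_2(Q,h_t)$ by Proposition~\ref{Prop:bound:CK}. This gives $\norm{\ee^{\LL_0t}h}_{L^1_\alpha}\leq\ee^{C}\norm{h}_{L^1_\alpha}$ there.
 \item On $[1,\infty)$, the formula above gives $C\ee^{-2\sqrt{\rho}t}\norm{h}_{L^1_\alpha}$.
\end{itemize}
Strong continuity is best taken from your perturbative construction rather than from the formula: a Kato--Voigt-type argument for the positive, unbounded fragmentation gain, plus the bounded perturbation $2\mathcal{C}_2(Q,\cdot)$. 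With the decay estimate in hand, your invertibility step is fine.
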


\subsection{Main results and outline}

Our first main result in this article concerns the uniqueness of equilibria for \eqref{eq:coag:frag} under the assumption \eqref{eq:ass:kernels}. More precisely we will prove the following statement.

\begin{theorem}\label{Thm:uniqueness:equi}
 Let $K_\eps$ and $F_\eps$ satisfy \eqref{eq:ass:kernels} and let $\rho>0$ be given. There exists $\eps_*>0$ such that for each $\eps\in(0,\eps_*)$ there exists at most one equilibrium $Q_\eps\in L_1^1$ with total mass $\int_{0}^{\infty}xQ_\eps(x)\dd{x}=\rho$ to \eqref{eq:coag:frag}.
\end{theorem}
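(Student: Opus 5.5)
Suppose $Q_\eps^1$ and $Q_\eps^2$ are two equilibria in $L_1^1$ with the same mass $\rho$. The plan is perturbative, in the spirit of \cite{MiM09,CaT21}: first show that every equilibrium lies close to $Q_0(x)=\ee^{-x/\sqrt{\rho}}$, then use the invertibility of $\LL_0$ from Proposition~\ref{Prop:spectral:gap:L0} to show that the difference $h=Q_\eps^1-Q_\eps^2$ must vanish.

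\emph{Step 1: a priori bounds.} Testing the stationary equation in the weak form \eqref{eq:coag:frag:weak} with $\varphi(x)=x^k$ (and also $\varphi=1$) gives bounds on moments, and on $\norm{Q_\eps}_{L^1_\alpha}$ for some $\alpha>1$. These bounds are uniform in $\eps\in(0,1)$, because $K_\eps$ and $F_\eps$ satisfy the hypotheses of Theorem~\ref{Thm:ex:sol} with $\eps$-independent constants, so the moment estimates of Section~\ref{Sec:moment:est} apply. Some care is needed since an equilibrium is only assumed to lie in $L^1_1$. The fragmentation gain yields a moment gain of order $k$ for $M_k$, and coagulation is bounded, so stationarity should force all higher moments to be finite and bounded in terms of $\rho$ alone. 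A lower bound on $M_0(Q_\eps)$ also follows from the balance of the terms with $\varphi=1$.

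\emph{Step 2: closeness to $Q_0$.} Write the stationary equation as
\[
0=\mathcal{C}_2(Q_\eps,Q_\eps)+\mathcal{F}_2(Q_\eps)+\eps\bigl(\mathcal{C}_W(Q_\eps,Q_\eps)+\mathcal{F}_V(Q_\eps)\bigr).
\]
Set $g=Q_\eps-Q_0$, which has zero mass. Since $\mathcal{C}_2(Q_0,Q_0)+\mathcal{F}_2(Q_0)=0$, we get
\[
\LL_0 g=-\mathcal{C}_2(g,g)-\eps R(Q_\eps),
\]
where Propositions~\ref{Prop:bound:CK} and~\ref{Prop:bound:FV} give $\norm{R(Q_\eps)}_{L^1_\alpha}\le C$. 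This alone does not force $g$ to be small, because of the quadratic term. I would therefore obtain smallness by a compactness argument. Suppose there are $\eps_n\to0$ and equilibria $Q_{\eps_n}$ that are not close to $Q_0$. The uniform moment bounds from Step 1, together with equi-integrability (from the equation, or an $L^\infty$ bound obtained via the gain terms), give weak $L^1$ compactness, with tightness provided by the moments. Any limit point is then a stationary solution of the constant-kernel problem with mass $\rho$. By Theorem~\ref{Thm:AB} the only such solution is $Q_0$, since the stationary solution starting from it converges to $Q_0$. Upgrading weak convergence to norm convergence in $L^1_\alpha$ is needed. I would use the Laplace transform: for constant kernels it gives an explicit Riccati-type identity, which for $\eps$-perturbations holds up to $O(\eps)$ errors. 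Hence $\norm{Q_\eps-Q_0}_{L^1_\alpha}\to0$ as $\eps\to0$, uniformly over all equilibria.

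\emph{Step 3: uniqueness.} Subtracting the stationary equations for the two equilibria gives
\[
\LL_0 h=-\mathcal{C}_2(g_1+g_2,h)-\eps\bigl(\mathcal{C}_W(Q^1_\eps+Q^2_\eps,h)+\mathcal{F}_V(h)\bigr),
\]
with $g_i=Q_\eps^i-Q_0$. The right-hand side has $L^1_\alpha$-norm at most $\bigl(3(\norm{g_1}+\norm{g_2})+C\eps\bigr)\norm{h}_{L^1_\alpha}$. Because $h$ has zero mass, Proposition~\ref{Prop:spectral:gap:L0} gives
\[
\norm{h}_{L^1_\alpha}\le \frac{C}{2\sqrt{\rho}}\bigl(o(1)+C\eps\bigr)\norm{h}_{L^1_\alpha},
\]
so $h=0$ for $\eps<\eps_*$.

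The main obstacle is Step 2, specifically the uniform closeness of every equilibrium to $Q_0$. This requires both uniform regularity/compactness (moments and equi-integrability) and strong convergence. If the compactness route turns out to be delicate, I would instead try the Laplace transform directly, to control $g$ quantitatively by $O(\eps)$.
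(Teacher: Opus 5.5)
\textbf{Step 2 has a genuine gap: it never produces norm closeness.} Your Step 3 is sound. It is in fact leaner than the paper's final step: for uniqueness you only need $\norm{Q^i_\eps-Q_0}_{L^1_\alpha}=o(1)$ uniformly over equilibria, together with invertibility of $\LL_0$ on the zero-mass subspace. The paper instead first bootstraps to $O(\eps)$ (Proposition~\ref{Prop:improved:stability:equilibria}) and then uses the spectral gap of $\LL_\eps$.

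The trouble is that Step 3 consumes $L^1_\alpha$-norm closeness, and Step 2 does not deliver it.
\begin{itemize}
\item Compactness of $\{Q_\eps\}$ gives at best weak convergence $Q_{\eps_n}\rightharpoonup Q_0$. Even this relies on equi-integrability, which you do not justify: the gain term $Q*Q$ of an $L^1$ function yields no $L^\infty$ bound.
\item The Laplace transform cannot upgrade weak convergence to norm convergence. Closeness of $\hat Q_\eps$ to $\hat Q_0$, even at rate $O(\eps)$, controls only weak convergence. For example, $\sin(nx)\chi_{[0,1]}(x)$ has Laplace transform of size $O(1/n)$ uniformly in $p\geq 0$, yet its $L^1$ norm stays bounded below. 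This also rules out your fallback of controlling $g$ quantitatively via the Laplace transform.
\item The stationary equation does not supply strong compactness for free either. The quadratic gain is not weak-to-strong continuous: $g_n=(1+\sin(nx))\chi_{[0,1]}$ converges weakly to $\chi_{[0,1]}$, but $g_n*g_n\approx x-\frac{x}{2}\cos(nx)$ on $[0,1]$.
\item A smaller point: Theorem~\ref{Thm:AB} needs $f_0\log f_0\in L^1$, which a weak $L^1$ limit need not satisfy. The Riccati equation for $\hat Q$ plus positivity does identify the limit, but again only weakly.
\end{itemize}

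\textbf{The missing idea is the paper's dynamical comparison} (Proposition~\ref{Prop:stab:equilibria}). Let $f_t$ solve the constant-kernel problem with $f_0=Q_\eps$. Since $Q_\eps$ is the stationary solution of the $\eps$-problem with the same datum, two estimates apply:
\begin{itemize}
\item The Gronwall estimate of Proposition~\ref{prop_st_stab_sol}, with constants uniform in $\eps$ by Proposition~\ref{Prop:moments:equi}, gives $\norm{f_t-Q_\eps}_{L^1_\alpha}\le C\eps\ee^{At}$.
\item Theorem~\ref{Thm:AB:weight} gives $\norm{f_t-Q_0}_{L^1_\alpha}\le C\ee^{-\mu t}$.
\end{itemize}
Choosing $t\sim\log(1/\eps)$ yields $\norm{Q_\eps-Q_0}_{L^1_\alpha}\le C\eps^\gamma$. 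Strong convergence is thus inherited, quantitatively, from the Aizenman--Bak entropy method rather than extracted from compactness.

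The regularity issue you flag does not disappear entirely. Applying Theorem~\ref{Thm:AB:weight} uniformly in $\eps$ needs a uniform bound on $\abs{\int Q_\eps\log Q_\eps}$, which the paper uses tacitly, and a lower bound on $M_0(Q_\eps)$, which you correctly derive. But once such a bound is available, the paper's route gives norm closeness immediately. In your route the weak-to-strong upgrade would still be open.
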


Our second main result is then concerned with the convergence of solutions to \eqref{eq:coag:frag} to the unique equilibrium. More precisely, we will first prove the following local convergence result which states that the stationary state attracts all solutions with initial condition sufficiently close to it.

\begin{proposition}[Local convergence to equilibrium]\label{Prop:local:convergence}
 Let $K_\eps$ and $F_\eps$ satisfy \eqref{eq:ass:kernels} and let $\rho>0$ be given. For each $\alpha\geq 2$ there exist constants $\eps_*,C_*,c,\delta>0$ such that for each $\eps\in (0,\eps_*)$ and  $f_0\in L_\alpha^1$ with $\norm{f_0-Q_\eps}_{L_\alpha^1}\leq \delta$ the corresponding solution $f_t$ of \eqref{eq:coag:frag} with total mass $\rho$ satisfies
 \begin{equation*}
  \norm{f_t-Q_\eps}_{L_\alpha^1}\leq C_*\norm{f_0-Q_\eps}_{L^1_\alpha}\ee^{-(2\sqrt{\rho}-c\eps)t} \qquad \text{for all }t\geq 0.
 \end{equation*}
Here $Q_\eps$ is the stationary solution to \eqref{eq:coag:frag} with total mass $\rho$ (which is unique assuming implicitly that $\eps_*$ is small enough).
\end{proposition}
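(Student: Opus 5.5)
We plan to argue perturbatively around the constant-kernel case, writing $h_t=f_t-Q_\eps$. Since both $f_t$ and $Q_\eps$ have mass $\rho$, $h_t$ lies in the mass-zero subspace $X_\alpha\vcc=L_\alpha^1\cap\{f\mid\int_0^\infty xf(x)\dd{x}=0\}$ on which Proposition~\ref{Prop:spectral:gap:L0} applies. Using $\mathcal{C}_{K_\eps}=\mathcal{C}_2+\eps\,\mathcal{C}_W$, $\mathcal{F}_{F_\eps}=\mathcal{F}_2+\eps\,\mathcal{F}_V$ and bilinearity, together with stationarity of $Q_\eps$, we obtain
\begin{equation*}
 \partial_t h_t=\LL_0 h_t+\bigl(2\mathcal{C}_2(Q_\eps-Q_0,h_t)+2\eps\,\mathcal{C}_W(Q_\eps,h_t)+\eps\,\mathcal{F}_V(h_t)\bigr)+\mathcal{C}_{K_\eps}(h_t,h_t)=\vcentcolon\LL_0h_t+\mathcal{R}_\eps h_t+\mathcal{N}(h_t),
\end{equation*}
where $Q_0=\ee^{-x/\sqrt\rho}$. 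By Propositions~\ref{Prop:bound:CK} and \ref{Prop:bound:FV}, $\norm{\mathcal{R}_\eps h}_{L^1_\alpha}\leq C(\norm{Q_\eps-Q_0}_{L^1_\alpha}+\eps\norm{Q_\eps}_{L^1_\alpha}+\eps)\norm{h}_{L^1_\alpha}$ and $\norm{\mathcal{N}(h)}_{L^1_\alpha}\leq \tfrac32(2+\eps)\norm{h}_{L^1_\alpha}^2$. Since all these operators conserve mass (weak form with $\varphi(x)=x$), the Duhamel formula stays in $X_\alpha$.

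\textbf{Step 1: $Q_\eps$ is close to $Q_0$.} The key preliminary is $\norm{Q_\eps-Q_0}_{L^1_\alpha}\leq C\eps$ with $\norm{Q_\eps}_{L^1_\alpha}$ bounded uniformly in $\eps$. Uniform moment bounds for stationary states come from the moment estimates of Section~\ref{Sec:moment:est} (Proposition~\ref{Prop:moments:evolution}), since the kernels satisfy the hypotheses of Theorem~\ref{Thm:ex:equil} uniformly in $\eps$. Then $g=Q_\eps-Q_0\in X_\alpha$ solves
\begin{equation*}
 \LL_0 g=-\mathcal{C}_2(g,g)-\eps\bigl(\mathcal{C}_W(Q_\eps,Q_\eps)+\mathcal{F}_V(Q_\eps)\bigr).
\end{equation*}
Inverting $\LL_0$ gives $\norm{g}\leq C\norm{g}^2+C\eps$. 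This yields $\norm{g}\leq 2C\eps$ only once $\norm{g}$ is known to be small, so we need a qualitative smallness statement first. We would obtain it by compactness: the uniform moment bounds give weak compactness in $L^1_1$, any limit as $\eps\to0$ is a stationary state for $K=F=2$ with mass $\rho$, and by Theorem~\ref{Thm:AB} this limit must be $Q_0$. Upgrading weak to strong convergence in $L^1_\alpha$ is the most delicate point. Our first attempt is to use the equation's structure: gain terms are convolutions, hence regularising, and moments of higher order $\alpha'>\alpha$ give tightness. Alternatively, this is presumably part of the uniqueness proof of Theorem~\ref{Thm:uniqueness:equi}, which we may reuse.

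\textbf{Step 2: linear perturbed semigroup.} With Step~1, $\norm{\mathcal{R}_\eps}_{X_\alpha\to X_\alpha}\leq c_1\eps$. By the bounded perturbation theorem applied with the bound $C\ee^{-2\sqrt\rho t}$ from Proposition~\ref{Prop:spectral:gap:L0}, the operator $\LL_0+\mathcal{R}_\eps$ generates a semigroup with $\norm{S_\eps(t)}\leq C\ee^{-(2\sqrt\rho-Cc_1\eps)t}$; we set $c=Cc_1$. There is one technical wrinkle: $\mathcal{R}_\eps$ contains $\mathcal{C}_W(Q_\eps,\cdot)$, which is bounded on $L^1_\alpha$ only, not more regular, but bounded perturbation suffices.

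\textbf{Step 3: nonlinear bootstrap.} From Duhamel's formula
\begin{equation*}
 \norm{h_t}\leq C\ee^{-\lambda t}\norm{h_0}+C'\int_0^t\ee^{-\lambda(t-s)}\norm{h_s}^2\dd{s},\qquad \lambda=2\sqrt\rho-c\eps.
\end{equation*}
Set $u(t)=\sup_{s\leq t}\ee^{\lambda s}\norm{h_s}$. Then $u\leq C\norm{h_0}+\tfrac{C'}{\lambda}u^2$. A continuity argument, using that $u$ is continuous because $f\in C([0,\infty),L^1_\alpha)$ by Theorem~\ref{Thm:ex:sol}, gives $u\leq 2C\norm{h_0}$ whenever $\norm{h_0}\leq\delta$ with $\delta$ small. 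This is the claimed estimate with $C_*=2C$. The restriction $\alpha\geq2$ is used only to invoke well-posedness from Theorem~\ref{Thm:ex:sol}. The main obstacle is Step~1, specifically the qualitative convergence $Q_\eps\to Q_0$ in $L^1_\alpha$.
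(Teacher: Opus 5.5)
Your Steps~2 and~3 match the paper's proof. The paper writes $\partial_t(f_t-Q_\eps)=\LL_\eps(f_t-Q_\eps)+\mathcal{C}_{K_\eps}(f_t-Q_\eps,f_t-Q_\eps)$ and uses the decay of $\ee^{\LL_\eps t}$ from Proposition~\ref{Prop:spectral:gap:Leps}. That decay is obtained, as in your Step~2, by bounded perturbation of $\LL_0$ with a perturbation of size $C\eps$. The paper then closes the quadratic Duhamel inequality. Your continuity argument for $u(t)=\sup_{s\le t}\ee^{\lambda s}\norm{h_s}_{L^1_\alpha}$ is a cleaner version of that last step. Your check that all operators preserve the mass-zero subspace is a point the paper leaves implicit.

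The genuine gap is Step~1, which you flag but do not close. Nothing in your sketch yields $\norm{Q_\eps-Q_0}_{L^1_\alpha}\to 0$, and without that a priori smallness the inequality $\norm{g}\le C\norm{g}^2+C\eps$ gives nothing. Uniform moment bounds give tightness but not equi-integrability. Even granting weak compactness, the remark that gain terms are convolutions does not give strong convergence of the bilinear term. For example, $g_n=\mathbf{1}_{[0,1]}(1+\sin nx)$ converges weakly, but $g_n*g_n$ contains $-\tfrac{x}{2}\cos(nx)$ on $[0,1]$, which does not tend to zero in $L^1$. So the upgrade from weak to strong convergence in $L^1_\alpha$ is unjustified. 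Your fallback is circular: the proof of Theorem~\ref{Thm:uniqueness:equi} takes $\norm{Q_{i,\eps}-Q_0}_{L^1_\alpha}\le C\eps$ as an input.

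The missing idea is the paper's dynamical comparison (Proposition~\ref{Prop:stab:equilibria}), which gives smallness directly in the strong norm and avoids compactness.
\begin{itemize}
\item Let $f^0_t$ solve the constant-kernel equation with $f^0_0=Q_\eps$. Since $Q_\eps$ is the solution of the perturbed equation with the same datum, Proposition~\ref{prop_st_stab_sol} gives $\norm{Q_\eps-f^0_t}_{L^1_\alpha}\le C\eps\ee^{Ct}$.
\item Theorem~\ref{Thm:AB:weight} gives $\norm{f^0_t-Q_0}_{L^1_\alpha}\le C\ee^{-\mu t}$.
\item Choosing $t=\frac{1}{C+\mu}\log\frac{\mu}{C\eps}$ yields $\norm{Q_\eps-Q_0}_{L^1_\alpha}\le C\eps^{\gamma}$ with $\gamma=\frac{\mu}{C+\mu}$.
\item Your quadratic inequality then upgrades this to $O(\eps)$, which is Proposition~\ref{Prop:improved:stability:equilibria}.
\end{itemize}
For this to work the constants must be uniform in $\eps$. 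That requires $\eps$-uniform bounds on the moments of $Q_\eps$, a lower bound on $M_0(Q_\eps)$ (to keep $\mu$ positive), and a bound on $\abs{\int_0^\infty Q_\eps\log Q_\eps\dd{x}}$.

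A smaller point concerns the moments. Proposition~\ref{Prop:moments:evolution} as stated is empty for a stationary solution, since it only says $M_m\le\max\{M_m,\mu_m\}$. The uniform bound comes from setting the time derivative to zero in the differential inequality of its proof, which is what Proposition~\ref{Prop:moments:equi} does.
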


Based on the previous result and Theorem~\ref{Thm:AB:weight} we can also prove the following global convergence result which shows that the unique equilibrium attracts all solutions with initial condition in a fixed large region provided the perturbation parameter $\eps$ is sufficiently small.

\begin{theorem}[Global convergence to equilibrium]\label{Thm:global:convergence}
 Let $K_\eps$ and $F_\eps$ satisfy \eqref{eq:ass:kernels} and let $\rho>0$ be given. For each $\alpha\geq 2$ and $R>0$ there exist constants $\eps_*,C_*,c,\delta>0$ such that for each $\eps\in(0,\eps_*)$ and $f_0\in L_\alpha^1$ with $\abs[\big]{\int_0^\infty f_0\log(f_0)\dd{x}}+M_0(f_0)\leq R$ the corresponding solution $f_t$ of \eqref{eq:coag:frag} with total mass $\rho$ satisfies
 \begin{equation*}
  \norm{f_t-Q_\eps}_{L_\alpha^1}\leq C_*\norm{f_0-Q_\eps}_{L^1_\alpha}\ee^{-(2\sqrt{\rho}-c\eps)t} \qquad \text{for all }t\geq 0.
 \end{equation*}
Here $Q_\eps$ is the stationary solution to \eqref{eq:coag:frag} with total mass $\rho$ (which is unique assuming implicitly that $\eps_*$ is small enough).
\end{theorem}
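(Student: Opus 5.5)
The plan is to combine the unperturbed convergence result Theorem~\ref{Thm:AB:weight} with the local result Proposition~\ref{Prop:local:convergence}. The solution $f_t$ of the perturbed equation is compared on a finite time interval $[0,T]$ with the solution $g_t$ of the constant-kernel problem ($K=F=2$) having the same initial datum $f_0$. The time $T$ is chosen so that $g_T$ lies well inside the basin of attraction of $Q_\eps$. By Gronwall, $f_T$ stays close to $g_T$ up to an error $O(\eps)e^{CT}$. Proposition~\ref{Prop:local:convergence} then applies from time $T$ on.

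\textbf{Step 1: uniform a priori bounds.} Using the moment estimates of Section~\ref{Sec:moment:est} (Proposition~\ref{Prop:moments:evolution}), I control $\sup_{t\ge0}\|f_t\|_{L^1_{\alpha_*}}$ and $\sup_{t\ge0}\|g_t\|_{L^1_{\alpha_*}}$ for some $\alpha_*>\alpha$, with bounds uniform in $\eps\in(0,1)$. This requires $f_0\in L^1_{\alpha_*}$. If only $f_0\in L^1_\alpha$ is given, I first invoke the instantaneous generation of moments that the fragmentation term typically provides, or else I simply assume the extra moment as part of the class determined by $R$. I also use that $M_0(f_0)$ and the entropy are bounded by $R$. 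Consequently the constant $C$ and the rate $\lambda=\frac{\alpha_*-\alpha}{\alpha_*}\min\{M_0(f_0),\sqrt\rho\}$ in Theorem~\ref{Thm:AB:weight} depend only on $R,\rho,\alpha$. A lower bound on $M_0(f_0)$ is needed here too. I get it from mass $\rho$ together with the entropy bound, since these prevent concentration of $f_0$ at large sizes and hence keep $M_0(f_0)$ away from zero.

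\textbf{Step 2: stability estimate.} Writing $h_t=f_t-g_t$, I have
\[
\partial_t h_t=\mathcal{C}_2(f_t+g_t,h_t)+\mathcal{F}_2(h_t)+\eps\bigl(\mathcal{C}_W(f_t,f_t)+\mathcal{F}_V(f_t)\bigr).
\]
Propositions~\ref{Prop:bound:CK} and \ref{Prop:bound:FV} bound $\mathcal{C}_W(f_t,f_t)$ and $\mathcal{F}_V(f_t)$ in $L^1_\alpha$. The term $\mathcal{F}_2$ on $L^1_\alpha$ is not bounded by these propositions. I handle it with the weak form and the sign structure: test against $\sgn(h)(1+x)^\alpha$ and use $\varphi(x+y)-\varphi(x)-\varphi(y)$ estimates to absorb the loss term. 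This gives
\[
\frac{\dd}{\dd t}\|h_t\|_{L^1_\alpha}\le C_1\|h_t\|_{L^1_{\alpha}}+C_2\|h_t\|_{L^1_{\alpha+1}}\cdots,
\]
where the higher moment is controlled by Step 1 through interpolation. The resulting bound is $\|h_T\|_{L^1_\alpha}\le C\eps e^{C T}$.

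\textbf{Step 3: uniform closeness of equilibria.} I also need $\|Q_\eps-e^{-x/\sqrt\rho}\|_{L^1_\alpha}\le C\eps$. This follows from applying $\LL_0^{-1}$ (Proposition~\ref{Prop:spectral:gap:L0}) to the equation satisfied by $Q_\eps-Q$. The quadratic term is small because $Q_\eps$ has bounded moments by Theorem~\ref{Thm:ex:equil} and the uniform estimates. Presumably this is already established in the proof of Theorem~\ref{Thm:uniqueness:equi}.

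\textbf{Step 4: conclusion.} I choose $T$ with $Ce^{-\lambda T}\le\delta/3$ and then $\eps_*$ so small that $C\eps e^{CT}+C\eps\le\delta/3$. This gives $\|f_T-Q_\eps\|_{L^1_\alpha}\le\delta$, and Proposition~\ref{Prop:local:convergence} yields decay at rate $2\sqrt\rho-c\eps$ for $t\ge T$. On $[0,T]$, Step 1 gives $\|f_t-Q_\eps\|_{L^1_\alpha}\le C$. To write the final bound in terms of $\|f_0-Q_\eps\|_{L^1_\alpha}$ there, I distinguish two cases. If $\|f_0-Q_\eps\|_{L^1_\alpha}\le\delta$, the local result applies directly from $t=0$. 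Otherwise $\|f_0-Q_\eps\|_{L^1_\alpha}>\delta$, and the bound follows by enlarging $C_*$ by the factor $e^{2\sqrt\rho T}C/\delta$.

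The main obstacle is Step 2: controlling the unbounded constant fragmentation term $\mathcal{F}_2$ in weighted $L^1_\alpha$ for the difference $h_t$, which loses a moment. I expect this to be handled by the uniform higher-moment bounds from Step 1.
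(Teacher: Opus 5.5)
Your proof is correct in outline and follows the paper's architecture. You compare $f_t$ on $[0,T]$ with the constant-kernel solution from $f_0$, and use Theorem~\ref{Thm:AB:weight} to bring that solution within $\delta/3$ of $Q_0$. You pay $C\eps\ee^{AT}$ for the flow (Proposition~\ref{prop_st_stab_sol}) and $C\eps$ for $\norm{Q_\eps-Q_0}_{L^1_\alpha}$ (Proposition~\ref{Prop:improved:stability:equilibria}). You fix $T$ first and $\eps_*$ second, then restart with Proposition~\ref{Prop:local:convergence} at time $T$.

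The one genuinely different step is how you get back to $\norm{f_0-Q_\eps}_{L^1_\alpha}$. The paper applies the Lipschitz estimate of Proposition~\ref{Prop:stability:2} with $Q_\eps$ as second solution, $\norm{f_T-Q_\eps}_{L^1_\alpha}\le\ee^{BT}\norm{f_0-Q_\eps}_{L^1_\alpha}$, and the same estimate covers $[0,T]$. You instead split according to whether $\norm{f_0-Q_\eps}_{L^1_\alpha}\le\delta$. In the far case you use only the a priori bound on $[0,T]$ and $\norm{f_T-Q_\eps}_{L^1_\alpha}\le\delta<\norm{f_0-Q_\eps}_{L^1_\alpha}$. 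This is valid and avoids any stability estimate in the initial datum. The paper's $B$ depends on $\sup_t\norm{f_t}_{L^1_\alpha}$, whereas in your far case that dependence is absorbed into $\norm{f_0-Q_\eps}_{L^1_\alpha}$ itself, since $\sup_t\norm{f_t}_{L^1_\alpha}\le C(1+\norm{f_0-Q_\eps}_{L^1_\alpha})$.

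Three of your supporting arguments are wrong as stated, though each is repaired by material already in the paper.

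(i) Mass and entropy do not bound $M_0(f_0)$ from below. Take $f_0=a\mathbf{1}_{[L,L+1]}$ with $a=\rho/(L+\tfrac12)$: it has mass $\rho$, $\abs{\int_0^\infty f_0\log f_0\dd{x}}=a\abs{\log a}\to0$, and $M_0(f_0)=a\to0$. The lower bound must come from the higher moment you assume anyway, via $\rho\le M_0(f_0)^{(\alpha_*-1)/\alpha_*}M_{\alpha_*}(f_0)^{1/\alpha_*}$. Your caveat itself is legitimate: the paper's $t_*$, $C_1$, $C_2$ silently depend on $\norm{f_0}_{L^1_{\alpha_*}}$ and on a lower bound for $M_0(f_0)$, neither of which is controlled by $R$.

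(ii) Step~2 is not an obstacle, and your proposed mechanism would fail. If a term $\norm{h_t}_{L^1_{\alpha+1}}$ really remained, uniform moment bounds could not make it $O(\eps)$. Used directly they give an $O(1)$ source term. Via interpolation they give $y'\le Cy+Cy^\theta+C\eps$ with $\theta<1$ and $y(0)=0$, which admits solutions of size independent of $\eps$. In fact no moment is lost. For $\alpha\ge1$ one has $(1+x)^\alpha+(1+y)^\alpha-(1+x+y)^\alpha\le1$, so testing with $\sgn(h_t)(1+x)^\alpha$ bounds the $\mathcal{F}_2$-contribution by $\int_0^\infty x\abs{h_t(x)}\dd{x}\le\norm{h_t}_{L^1_\alpha}$. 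This is exactly Proposition~\ref{prop_st_stab_sol}; just cite it.

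(iii) In Step~3, bounded moments make the quadratic term bounded, not small. From $\norm{Q_\eps-Q_0}_{L^1_\alpha}\le C\norm{Q_\eps-Q_0}_{L^1_\alpha}^2+C\eps$ you can select the small branch only with the a priori bound $\norm{Q_\eps-Q_0}_{L^1_\alpha}\le C\eps^\gamma$ of Proposition~\ref{Prop:stab:equilibria}, which is obtained dynamically. The resulting linear bound is Proposition~\ref{Prop:improved:stability:equilibria}.

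With these replacements your argument goes through.
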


The remainder of the article is organised as follows. In Section~\ref{Sec:moments} we provide uniform moment estimates both for the time-dependent problem as well as for the stationary solutions. In Section~\ref{Sec:stability} we will prove the required stability estimates for the stationary states and the flow generated by \eqref{eq:coag:frag}. Based on this, we will then provide in Section~\ref{Sec:spectral:gap} the exponential decay of the semi-group generated by the linearisation of the coagulation-fragmentation operator around an equilibrium. In Sections~\ref{Sec:uniqueness} and \ref{Sec:convergence} we will prove our main results, i.e.\@ the uniqueness of stationary states as well as that the latter attract all solutions to \eqref{eq:coag:frag} with an exponential convergence rate upon assuming suitable bounds on the initial data.

\section{Moment estimates}\label{Sec:moments}

In this section we provide uniform moment estimates on the solutions to \eqref{eq:coag:frag}. The proofs rely on arguments which are well established and have been used in various works before. In order to be self-contained, we provide the main estimates while the presentation below is close to \cite{EMR05}.

\subsection{Moment estimates for the equilibria}

The following statement provides uniform estimates on all non-negative moments for stationary states of \eqref{eq:coag:frag} in terms of the total mass $\rho$.

\begin{proposition}\label{Prop:moments:equi}
 Let $\rho>0$ and $\eps_*>0$ be fixed and let $K_\eps$ and $F_\eps$ satisfy \eqref{eq:ass:kernels} with $\eps\in (0,\eps_*)$. Let $Q_\eps$ be a stationary solution of \eqref{eq:coag:frag} such that $\int_{0}^{\infty}xQ_{\eps}(x)\dd{x}=\rho$. For any $m\in [0,\infty)$ there exists a constant $C_m=C_m(\rho,\eps_*)$ such that 
 \begin{equation*}
   \int_{0}^{\infty}x^m Q_{\eps}(x)\dd{x} \leq C_{m}.
 \end{equation*}
\end{proposition}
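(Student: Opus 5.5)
We obtain bounds on the moments of a stationary solution by testing the stationary equation \eqref{eq:coag:frag:weak} (with vanishing left-hand side) against power weights. We first bound the moments of order $m\geq 1$, then the moments of order $m\in[0,1)$. For $\varphi(x)=x^m$ we have $\varphi(x+y)-\varphi(x)-\varphi(y)\geq 0$ when $m\geq 1$, so stationarity gives the balance
\begin{equation*}
 \int_0^\infty\int_0^\infty K_\eps(x,y)Q_\eps(x)Q_\eps(y)\Delta_m(x,y)\dd{x}\dd{y}=\int_0^\infty\int_0^\infty F_\eps(x,y)Q_\eps(x+y)\Delta_m(x,y)\dd{x}\dd{y},
\end{equation*}
where $\Delta_m(x,y)=(x+y)^m-x^m-y^m$. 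On the right, $F_\eps\geq 2$, and a substitution gives $\int_0^z\Delta_m(y,z-y)\dd{y}=\frac{m-1}{m+1}z^{m+1}$. The fragmentation side is therefore at least $\frac{2(m-1)}{m+1}M_{m+1}(Q_\eps)$. On the left we use $K_\eps\leq 2+\eps_*$ together with the elementary inequality $\Delta_m(x,y)\leq C_m(x^{m-1}y+xy^{m-1})$. This bounds the coagulation side by $C(\eps_*,m)\,\rho\, M_{m-1}(Q_\eps)$. We conclude
\begin{equation*}
 M_{m+1}(Q_\eps)\leq C(m,\eps_*)\,\rho\,M_{m-1}(Q_\eps),\qquad m>1.
\end{equation*}
At $m=1$ the weight is degenerate. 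To start the induction we take $m=2$, which gives $M_3\leq C\rho M_1=C\rho^2$. Interpolation between $M_1=\rho$ and $M_3$ then controls every $M_m$ with $m\in[1,3]$, and iterating the recursion in steps of two controls all $m\geq 1$.

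The main obstacle is the small moments, in particular $M_0$ and the moments of order $m<1$, which control the number of small clusters. Fragmentation produces small particles, and the bound $V\leq 1/(x+y)$ is singular at the origin, so a priori $M_0$ could be large. Here we take $\varphi\equiv 1$, so that $\varphi(x+y)-\varphi(x)-\varphi(y)=-1$. The stationary equation then gives the identity
\begin{equation*}
 \int\!\!\int K_\eps Q_\eps(x)Q_\eps(y)\dd{x}\dd{y}=\int\!\!\int F_\eps(x,y)Q_\eps(x+y)\dd{x}\dd{y}.
\end{equation*}
The left-hand side is at least $2M_0^2$. On the right, $\int_0^z F_\eps(y,z-y)\dd{y}\leq 2z+\eps_*$, so the right-hand side is at most $2\rho+\eps_* M_0$. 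This yields the quadratic inequality $2M_0^2\leq 2\rho+\eps_*M_0$, hence $M_0\leq C(\rho,\eps_*)$.

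Note that the singular part of $V$ contributes only a multiple of $M_0$, so $M_0$ appears on both sides. This is exactly where the quadratic coagulation term absorbs it. With $M_0$ bounded, Hölder interpolation between $M_0$ and $M_1=\rho$ bounds $M_m(Q_\eps)$ for all $m\in[0,1]$. Combined with the first step, this gives the bound for all $m\geq 0$, with constants depending only on $\rho$ and $\eps_*$. Finiteness of each intermediate quantity, which is needed to justify the test functions, can be ensured by truncating $\varphi$ (e.g.\ $\min\{x,L\}^m$) and letting $L\to\infty$ by monotone convergence.
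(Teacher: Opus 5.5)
Your proposal is correct and is essentially the paper's proof: $M_0$ is bounded by testing with $\varphi\equiv 1$ and solving the quadratic inequality $2M_0^2\leq 2\rho+\eps_*M_0$, and the higher moments come from testing with $\varphi(x)=x^m$ and using $F_\eps\geq 2$, $\int_0^z\Delta_m(y,z-y)\dd{y}=\frac{m-1}{m+1}z^{m+1}$, $K_\eps\leq 2+\eps_*$ and $\Delta_m\leq C_m(x^{m-1}y+xy^{m-1})$ to get $M_{m+1}\lesssim\rho M_{m-1}$. The only difference is in closing this estimate: you step up in increments of two from $M_3\lesssim\rho^2$, whereas the paper closes it at each fixed $m$ by Hölder interpolation against $M_0$ and $M_1$ (tacitly using $M_m<\infty$ to divide), so your version is slightly cleaner and does not need $M_0$ for $m\geq 1$.

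One correction to your remark on justifying the test functions, a point the paper passes over: $\min\{x,L\}^m$ is not superadditive (at $x=y=L$ the increment is $-L^m$), so monotone convergence does not apply to it. The convex truncation $\varphi_L(x)=x^m$ for $x\leq L$, continued linearly with slope $mL^{m-1}$ for $x>L$, does work, because $0\leq\varphi_L(x+y)-\varphi_L(x)-\varphi_L(y)\uparrow\Delta_m(x,y)$ as $L\to\infty$.
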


\begin{proof}
 For $a\in \R$ we denote by $M_a$ the moment of order $a$ of $Q_\eps$ provided it exists, i.e.\@ $M_a=\int_{0}^{\infty}x^{a}Q_{\eps}(x)\dd{x}$.  From the weak formulation of \eqref{eq:coag:frag} we notice that $Q_\eps$ satisfies
 \begin{multline}\label{eq:equi:weak}
  \frac{1}{2}\int_{0}^{\infty}\int_{0}^{\infty}F_\eps(x,y)\bigl[\varphi(x+y)-\varphi(x)-\varphi(y)\bigr]Q_\eps(x+y)\dd{x}\dd{y}\\*
  =\frac{1}{2}\int_{0}^{\infty}\int_{0}^{\infty}K_\eps(x,y)\bigl[\varphi(x+y)-\varphi(x)-\varphi(y)\bigr]Q_\eps(x)Q_\eps(y)\dd{x}\dd{y}.
 \end{multline}
 Choosing $\varphi\equiv 1$, we deduce together with \eqref{eq:ass:kernels} that 
 \begin{multline*}
  \biggl(\int_{0}^{\infty}Q_{\eps}(x)\dd{x}\biggr)^2\leq \frac{1}{2}\int_{0}^{\infty}\int_{0}^{\infty}K_{\eps}(x,y)Q_{\eps}(x)Q_{\eps}(y)\dd{x}\dd{y}\\*
  =\frac{1}{2}\int_{0}^{\infty}\int_{0}^{\infty}F_{\eps}(x,y)Q_{\eps}(x+y)\dd{x}\dd{y}\leq \frac{1}{2}\int_{0}^{\infty}\int_{0}^{x}\bigl(2+\frac{1}{x}\bigr)Q_{\eps}(x)\dd{y}\dd{x}\\*
  =\int_{0}^{\infty}xQ_\eps(x)\dd{x}+\frac{\eps}{2}\int_{0}^{\infty}Q_\eps(x)\dd{x}.
 \end{multline*}
 By means of Young's inequality we have $\frac{\eps}{2}\int_{0}^{\infty}Q_\eps(x)\dd{x}\leq \frac{\eps^2}{8}+\frac{1}{2}\bigl(\int_{0}^{\infty}Q_\eps(x)\dd{x}\bigr)^2$ such that we conclude
 \begin{equation*}
  \frac{1}{2}\biggl(\int_{0}^{\infty}Q_{\eps}(x)\dd{x}\biggr)^2\leq \biggl(\int_{0}^{\infty}xQ_{\eps}(x)\dd{x}\biggr)+\frac{\eps^2}{8}.
 \end{equation*}
 Thus, using that $\int_{0}^{\infty}xQ_{\eps}(x)\dd{x}=\rho$ we have
 \begin{equation}\label{eq:M0:upper:bd}
  M_0\leq \bigl(2\rho+\eps^2/4\bigr)^{1/2}.
 \end{equation}
To estimate higher order moments, we proceed similarly, i.e.\@ for $m>1$ we choose $\varphi_m(x)=x^m$ in \eqref{eq:equi:weak} to get
\begin{multline*}
   \frac{1}{2}\int_{0}^{\infty}\int_{0}^{\infty}F_\eps(x,y)\bigl[(x+y)^m-x^m-y^m\bigr]Q_\eps(x+y)\dd{x}\dd{y}\\*
  =\frac{1}{2}\int_{0}^{\infty}\int_{0}^{\infty}K_\eps(x,y)\bigl[(x+y)^m-x^m-y^m\bigr]Q_\eps(x)Q_\eps(y)\dd{x}\dd{y}.
\end{multline*}
Estimating by means of \eqref{eq:ass:kernels} as before and using Fubini's theorem and a change of variables on the left-hand side, we obtain
\begin{multline*}
   \int_{0}^{\infty}\int_{0}^{x}\bigl[x^m-(x-y)^m-y^m\bigr]Q_\eps(x)\dd{y}\dd{x}\\*
  \leq \frac{1}{2}\int_{0}^{\infty}\int_{0}^{\infty}(2+\eps)\bigl[(x+y)^m-x^m-y^m\bigr]Q_\eps(x)Q_\eps(y)\dd{x}\dd{y}.
\end{multline*}
We use the elementary estimate $(x+y)^m-x^m-y^m\leq C_m(x^{m-1}y+xy^{m-1})$ on the right-hand side and compute the integral in $y$ on the left-hand side explicitly to deduce
\begin{equation}\label{eq:Mm:upper:1}
   \frac{m-1}{m+1}\int_{0}^{\infty}x^{m+1}Q_\eps(x)\dd{x}\\*
  \leq (2+\eps)\int_{0}^{\infty}xQ_\eps(x)\dd{x}\int_{0}^{\infty}y^{m-1}Q_{\eps}(y)\dd{y}.
\end{equation}
By means of Hölder's inequality together with \eqref{eq:M0:upper:bd} we deduce
\begin{multline}\label{eq:Mm:upper:2}
 \int_{0}^{\infty}y^{m-1}Q_{\eps}(y)\dd{y}\leq \biggl(\int_{0}^{\infty}y^{m}Q_{\eps}(y)\dd{y}\biggr)^{\frac{m-1}{m}}\biggl(\int_{0}^{\infty}Q_{\eps}(y)\dd{y}\biggr)^{\frac{1}{m}}\\*
 \leq \bigl(2\rho+\eps^2/4\bigr)^{\frac{1}{2m}}\biggl(\int_{0}^{\infty}y^{m}Q_{\eps}(y)\dd{y}\biggr)^{\frac{m-1}{m}}
\end{multline}
as well as
\begin{multline}\label{eq:Mm:upper:3}
 \int_{0}^{\infty}x^{m}Q_{\eps}(x)\dd{x}\leq \biggl(\int_{0}^{\infty}x^{m+1}Q_{\eps}(x)\dd{x}\biggr)^{\frac{m-1}{m}}\biggl(\int_{0}^{\infty}xQ_{\eps}(x)\dd{x}\biggr)^{\frac{1}{m}}\\*
 =\rho^{\frac{1}{m}}\biggl(\int_{0}^{\infty}x^{m+1}Q_{\eps}(x)\dd{x}\biggr)^{\frac{m-1}{m}}.
\end{multline}
Together with \eqref{eq:Mm:upper:2} and \eqref{eq:Mm:upper:3} we can further estimate \eqref{eq:Mm:upper:1} from below and above to obtain
\begin{equation*}
    \frac{m-1}{m+1}\rho^{-\frac{1}{m-1}}\biggl(\int_{0}^{\infty}x^{m}Q_\eps(x)\dd{x}\biggr)^{\frac{m}{m-1}}\\*
  \leq (2+\eps)\rho\bigl(2\rho+\eps^2/4\bigr)^{\frac{1}{2m}}\biggl(\int_{0}^{\infty}y^{m}Q_{\eps}(y)\dd{y}\biggr)^{\frac{m-1}{m}}.
\end{equation*}
From this we immediately deduce
\begin{equation*}
 M_m=\int_{0}^{\infty}x^{m}Q_\eps(x)\dd{x}\leq \Bigl(\frac{m+1}{m-1}(2+\eps)\rho^{\frac{m}{m-1}}(2\rho+\eps^2/4)^{\frac{1}{2m}}\Bigr)^{\frac{m(m-1)}{2m-1}}.
\end{equation*}
\end{proof}

\subsection{Moment bounds for time-dependent solutions}\label{Sec:moment:est}
The next statement provides global moment estimates for solutions to \eqref{eq:coag:frag} in terms of the initial condition.
\begin{proposition}\label{Prop:moments:evolution}
Let $K$ and $F$ satisfy $a_*\leq K(x,y)\leq A_*$ and $b_*\leq F(x,y)\leq B_*(1+\frac{1}{x+y})$ with constants $a_*,A_*,b_*,B_*>0$ and let $m\geq 1$ and $f_0 \in L_m^1$. There exists a constant $\mu_{m}>0$ depending only on $A_*,A_*,b_*,B_*$ and the total mass $\rho=M_1(f_0)$ such that each solution $f$ to \eqref{eq:coag:frag} with initial datum $f_0$ satisfies
\begin{equation*}
M_m(f_t) \leq \max \{M_m(f_0), \mu_m\} \ \ \forall t \ \in [0, \infty). 
\end{equation*}
\end{proposition}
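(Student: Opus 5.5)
We derive a differential inequality for $M_m(f_t)$ of the form
\begin{equation*}
\frac{\dd}{\dd t}M_m \le -c\,M_m^{\frac{m}{m-1}} + C\,M_m^{\frac{m-2}{m-1}} + C',
\end{equation*}
mirroring the stationary computation in Proposition~\ref{Prop:moments:equi}, and then conclude by a comparison argument. The right-hand side is negative once $M_m$ exceeds a threshold $\mu_m$ depending only on the kernel constants and $\rho$, so $M_m(f_t)$ can never rise above $\max\{M_m(f_0),\mu_m\}$. The case $m=1$ is trivial by mass conservation, so we take $m>1$.

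\textbf{Step 1 (zeroth moment).} Test \eqref{eq:coag:frag:weak} with $\varphi\equiv1$. The coagulation term is bounded above by $-\tfrac{a_*}{2}M_0^2$. The fragmentation term is bounded above by $\tfrac{B_*}{2}\int_0^\infty x(1+\tfrac1x)f\,\dd x = \tfrac{B_*}{2}(\rho+M_0)$. Hence $\frac{\dd}{\dd t}M_0 \le -\tfrac{a_*}{2}M_0^2 + \tfrac{B_*}{2}(\rho+M_0)$, which gives $M_0(f_t)\le \max\{M_0(f_0),\bar\mu_0\}$. Since the $m$-th moment estimate should not depend on $M_0(f_0)$, we avoid using this bound later. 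Instead we control lower moments by Hölder interpolation between $M_1=\rho$ and $M_m$.

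\textbf{Step 2 ($m$-th moment).} Test with $\varphi(x)=x^m$, which is convex and satisfies $(x+y)^m-x^m-y^m\ge0$.

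For fragmentation, $F\ge b_*$ gives the gain
\begin{equation*}
-\tfrac{b_*}{2}\int_0^\infty\!\!\int_0^x\bigl[x^m-(x-y)^m-y^m\bigr]f(x)\dd y\dd x = -b_*\tfrac{m-1}{2(m+1)}M_{m+1}.
\end{equation*}

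For coagulation, $K\le A_*$ and $(x+y)^m-x^m-y^m\le C_m(x^{m-1}y+xy^{m-1})$ give a bound of $A_*C_m\rho M_{m-1}$.

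For $m\in(1,2]$, Hölder between $M_1$ and $M_m$ is not applicable to $M_{m-1}$ with $m-1<1$. In that case we write $x^{m-1}y\le xy+ y$ on the relevant region, or use $x^{m-1}\le 1+x$, to get the bound $C(\rho+\rho M_0)$. We then either use Step 1 or, better, handle the case $m\le 2$ with the elementary inequality $(x+y)^m-x^m-y^m\le C_m\, xy\,(x+y)^{m-2}$ for $m\le2$, which is at most $C_m\min\{x,y\}^{m-1}\max\{x,y\}$. The term $M_0$ is the real nuisance here. Since the constant in the statement is allowed to depend on $\rho$ only, we need $M_0$ to be bounded along the flow independently of the initial datum. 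Step 1's ODE indeed yields $M_0(f_t)\le \max\{M_0(f_0),\bar\mu_0\}$, but that depends on $M_0(f_0)$. So for $m\in(1,2)$ we avoid $M_0$ altogether by using the $K\ge a_*$ loss structure or by accepting the bound $M_{m-1}\le M_0+\rho$. This bookkeeping for small $m$ is the main obstacle. If it cannot be done cleanly, we will let $\mu_m$ also depend on $\max\{M_0(f_0),\bar\mu_0\}$, which is finite for $f_0\in L^1_m$.

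For $m\ge2$, interpolate $M_{m-1}\le \rho^{1/(m-1)}M_m^{(m-2)/(m-1)}$ and $M_m\le \rho^{1/m}M_{m+1}^{(m-1)/m}$, exactly as in \eqref{eq:Mm:upper:2}–\eqref{eq:Mm:upper:3}. This gives
\begin{equation*}
\frac{\dd}{\dd t}M_m \le -c_1 M_m^{\frac{m}{m-1}} + c_2 M_m^{\frac{m-2}{m-1}},
\end{equation*}
where $c_1,c_2$ depend only on $b_*,A_*,m,\rho$. The fragmentation upper bound $B_*(1+1/(x+y))$ is needed only to justify the computation, namely finiteness of the terms and a truncation argument using $\varphi_R(x)=\min\{x,R\}^m$ with $R\to\infty$.

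\textbf{Step 3 (conclusion).} The right-hand side is negative for $M_m>\mu_m:=(c_2/c_1)^{(m-1)/2}$. A standard ODE comparison argument (if $M_m(t)$ exceeded $\max\{M_m(f_0),\mu_m\}$, consider the first time this happens) yields the claim. Rigor requires knowing that $t\mapsto M_m(f_t)$ is absolutely continuous. This follows from $f\in C([0,\infty),L^1_m)$ together with the truncated test functions and monotone convergence, as in \cite{EMR05}.
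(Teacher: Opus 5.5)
For $m\ge 2$ your argument is correct and takes a different, sharper route than the paper. The paper treats all $m>1$ uniformly. It interpolates $M_{m-1}\le M_m^{(m-1)/m}M_0^{1/m}$ between $M_0$ and $M_m$, and then inserts $M_0(f_t)\le\max\{M_0(f_0),\mu_0\}$ from the $\varphi\equiv1$ step. So the threshold the paper actually obtains depends on $M_0(f_0)$, despite the wording of the statement. Your interpolation $M_{m-1}\le\rho^{1/(m-1)}M_m^{(m-2)/(m-1)}$ between $M_1=\rho$ and $M_m$ never uses the zeroth moment. For $m\ge2$ you therefore prove the statement exactly as written, with $\mu_m$ depending only on $A_*$, $b_*$, $m$ and $\rho$. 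The paper's route buys uniformity in $m>1$ at the price of that dependence.

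For $m\in(1,2)$ the alternatives you float cannot work. The bound $C_m\min\{x,y\}^{m-1}\max\{x,y\}$ still integrates to $M_1M_{m-1}$ with $m-1<1$. Testing with $x^m$ also gives nothing to exploit from coagulation: its total contribution is nonnegative since $(x+y)^m-x^m-y^m\ge0$, and $K\ge a_*$ only bounds that gain from below.

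In fact no bookkeeping can succeed, because the statement as written is false in this range. Fix any candidate $\mu_m$ and take $f_0=g_\delta+h_L$:
\begin{itemize}
\item $g_\delta$ has mass $\rho/2$ and support in $[\delta,2\delta]$;
\item $h_L$ has mass $\rho/2$ and support in $[L,2L]$, with $L$ tuned so that $M_m(f_0)=\mu_m$. Then $L$ stays bounded as $\delta\to0$.
\end{itemize}
Self-coagulation of $g_\delta$ contributes at least $c\,a_*\rho^2\delta^{m-2}$ to $\frac{\dd}{\dd{t}}M_m(f_t)$ at $t=0$. The fragmentation loss is at most $\frac{B_*}{2}\frac{m-1}{m+1}\bigl(M_{m+1}(f_0)+M_m(f_0)\bigr)$, which stays bounded as $\delta\to0$. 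Hence for $\delta$ small one gets $M_m(f_t)>\mu_m=\max\{M_m(f_0),\mu_m\}$ for small $t>0$.

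Your fallback, letting $\mu_m$ depend on $\max\{M_0(f_0),\bar\mu_0\}$ (e.g.\ via $x^{m-1}\le 1+x$), is therefore necessary, not a concession. It is exactly what the paper's proof delivers; the paper simply accepts that dependence for every $m>1$.
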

\begin{proof}
We first estimate $M_0(t)=M_0(f_t)$. Choosing $\varphi\equiv 1$ in \eqref{eq:coag:frag:weak} gives 
\begin{equation*}
\frac{\dd}{\dd{t}} \int_0^\infty f_t(x)\dd{x} = - \frac{1}{2} \int_0^\infty \int_0^\infty K (x,y) f_t(x) f_t(y) \dd{y} \dd{x} + \frac{1}{2} \int_0^\infty \int_0^x F(x-y,y) f_t(x) \dd{y} \dd{x}.
\end{equation*}
Hence,
\begin{equation*}
\frac{\dd}{\dd{t}} M_0(t) \leq - \frac{a_*}{2} M_0^2(t)  + \frac{B^*}{2} M_1(t)+ \frac{B^*}{2} M_0(t)= - \frac{A_*}{2} M_0^2(t) + \frac{B^*}{2} M_0(t) + \frac{B^*}{2} \rho
\end{equation*}
where we used in the last step that \eqref{eq:coag:frag} conserves the total mass, i.e.\@ $M_1(f_t)=M_1(f_0)=\rho$ for all $t\geq 0$. Integrating this differential inequality the claimed bound on $M_0$ immediately follows (see \cite[Lemma~3.3]{EMR05}).
To bound the moments of higher order we proceed in same way, i.e.\@ we choose $\varphi (x) = x^m$ in \eqref{eq:coag:frag:weak}. Using the elementary inequality 
\begin{equation*}
(x+y)^m -x^m -y^m \leq C_m (x^{m-1}y + x y^{m-1}) \ \ \text{where $m>1$, $(x,y) \in (0, \infty) \times (0, \infty)$,}
\end{equation*}
for some constant $C_m >0$ (see \cite[p. 379, Lemma 7.4.2]{Ban20}) and $(x+y)^m-x^m-y^m\geq 0$ for $m>1$ we get
\begin{multline*}
 \frac{\dd}{\dd{t}}M_m(t)\leq A_* C_m M_{1}(t)M_{m-1}(t)-\frac{b_*}{2}\int_{0}^{\infty}\int_{0}^{\infty}\bigl(1+\frac{1}{x}\bigr)\bigl[(x+y)^m-x^m-y^m]f_t(x+y)\dd{x}\dd{y}\\*
 \leq A_* C_m M_{1}(t)M_{m-1}(t)-\frac{b_*}{2}\int_{0}^{\infty}\int_{0}^{x}\bigl(1+\frac{1}{x}\bigr)\bigl[x^m-(x-y)^m-y^m]f_t(x)\dd{y}\dd{x}\\*
 \leq A_* C_m M_{1}(t)M_{m-1}(t)-\frac{b_*}{2}\int_{0}^{\infty}\int_{0}^{x}\bigl(1+\frac{1}{x}\bigr)\bigl[x^m-(x-y)^m-y^m]f_t(x)\dd{y}\dd{x}.
\end{multline*}
Since $\int_{0}^{x}\bigl[x^m-(x-y)^m-y^m]\dd{y}=\frac{m-1}{m+1}x^{m+1}$ and $M_1(t)=\rho$ for all $t\geq 0$, we obtain 
\begin{equation}\label{eq:moment:1}
\frac{\dd}{\dd{t}} M_{m}(t)  \leq A_* C_m \rho M_{m-1} (t)   - \frac{b_* }{2} \frac{m-1}{m+1}\big(M_{m+1}(t)+M_m(t)\bigr). 
\end{equation}
By means of Hölder's inequality, we get
\begin{equation}\label{eq:moment:2}
M_{m-1} (t)  = \int_0^\infty x^{m-1} f_t^{\frac{m-1}{m}}(x) f_t^{\frac{1}{m}} (x) \dd{x} \leq
M_{m}^{\frac{m-1}{m}}(t) M_{0}^{\frac{1}{m}}(t). 
\end{equation}
Similarly, we obtain
\begin{equation*}
M_{m} (t) = \int_0^\infty x ^\frac{1}{m} f_t^\frac{1}{m} (x) x ^\frac{(m-1)(m+1)}{m} f_t^\frac{m-1}{m} (x) \dd{x}  \leq M_1^\frac{1}{m}(t) M_{m+1}^{\frac{m-1}{m}}(t) . 
\end{equation*}
Thus, since $M_1(t)=\rho$ for all $t\geq 0$, we get
\begin{equation}\label{eq:moment:3}
- M_{m+1}(t) \leq - \rho^{-\frac{1}{m-1}} M_{m}^{\frac{m}{m-1}}(t). 
\end{equation}
Combining \eqref{eq:moment:2} and \eqref{eq:moment:3} with \eqref{eq:moment:1} we get 
\begin{equation*}
\frac{\dd}{\dd{t}} M_{m}(t)  \leq A_* C_m \rho M_{0}^{\frac{1}{m}}(t) M_{m}^{\frac{m-1}{m}}(t) - \frac{b_* }{2} \frac{m-1}{m+1}\big(\rho^{-\frac{1}{m-1}} M_{m}^{\frac{m}{m-1}}(t)+M_m(t)\bigr). 
\end{equation*}
Using that $M_m(t)\geq 0$ and $M_0(t)\leq \max\{M_0(f_0),\mu_0\}$ we can further estimate to get
\begin{equation*}
\frac{\dd}{\dd{t}} M_{m}(t)  \leq A_* C_m \rho \max\{M_0(f_0),\mu_0\}^{\frac{1}{m}} M_{m}^{\frac{m-1}{m}}(t) - \frac{b_* }{2} \frac{m-1}{m+1}\rho^{-\frac{1}{m-1}} M_{m}^{\frac{m}{m-1}}(t). 
\end{equation*}
The claimed bound follows then again by integrating the differential inequality as above.
\end{proof}

\section{Stability estimates}\label{Sec:stability}

In this section we provide the stability estimates both on the time-dependent and stationary solutions to \eqref{eq:coag:frag}.

\subsection{Stability of time-dependent solutions}

The following statement provides stability of solutions to \eqref{eq:coag:frag} on a finite time horizon which is a straightforward consequence of Gronwall's inequality together with uniform moment estimates.

\begin{proposition}\label{prop_st_stab_sol} 
Let $K_\eps$ and $F_\eps$ satisfy \eqref{eq:ass:kernels} and let $f_t^\eps$ denote the solution to \eqref{eq:coag:frag} for $\eps\in[0,1]$ with initial condition $f_0\in L_\alpha^1$ for $\alpha\geq 1$. There exist constants $C,A>0$ which are independent of $\eps$ such that 
\begin{equation*}
\lVert f_t^\eps (t) - f_t^0 (t) \rVert_{L^1_k} \leq \eps C (\exp{(At)}-1). 
\end{equation*}
\end{proposition}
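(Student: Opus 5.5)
We write $g_t=f_t^\eps-f_t^0$ and compare the two mild formulations \eqref{eq:mild:sol} with the same initial datum $f_0$. The kernels split as $K_\eps=K_0+\eps W$ and $F_\eps=F_0+\eps V$, and $\mathcal{C}_K$ is bilinear and symmetric in $(f,g)$ while $\mathcal{C}_K$ and $\mathcal{F}_F$ are linear in the kernel. Hence
\begin{equation*}
 g_t=\int_0^t\Bigl(\mathcal{C}_{2}(f_s^\eps+f_s^0,g_s)+\mathcal{F}_{2}(g_s)+\eps\,\mathcal{C}_{W}(f_s^\eps,f_s^\eps)+\eps\,\mathcal{F}_{V}(f_s^\eps)\Bigr)\dd{s},
\end{equation*}
using $\mathcal{C}_2(f^\eps,f^\eps)-\mathcal{C}_2(f^0,f^0)=\mathcal{C}_2(f^\eps+f^0,f^\eps-f^0)$. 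We then take the $L^1_\alpha$ norm (this is the weight $k$ in the statement).

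The linear terms in $g$ are handled as follows. Proposition~\ref{Prop:bound:CK} gives $\norm{\mathcal{C}_2(f^\eps_s+f^0_s,g_s)}_{L^1_\alpha}\le 3(\norm{f^\eps_s}_{L^1_\alpha}+\norm{f^0_s}_{L^1_\alpha})\norm{g_s}_{L^1_\alpha}$. For the constant fragmentation $\mathcal{F}_2$, Proposition~\ref{Prop:bound:FV} does not apply directly, since $F=2$ is not bounded by $\nu_*/(x+y)$. Here I would estimate by hand: $\mathcal{F}_2(g)=-xg(x)+2\int_x^\infty g(z)\dd{z}$, so $\norm{\mathcal{F}_2(g)}_{L^1_\alpha}\le C\norm{g}_{L^1_{\alpha+1}}$. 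This costs one moment, which is \textbf{the main obstacle}, because a naive Gronwall argument in $L^1_\alpha$ does not close.

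To get around this, I would not use the mild form with absolute values termwise. Instead I would use the renormalised formulation \eqref{eq:ren:sol} with $\beta$ approximating $|\cdot|$ and $\varphi=(1+x)^\alpha$, i.e.\ test the equation for $g$ against $\sgn(g_t)(1+x)^\alpha$. The loss term of $\mathcal{F}_2$ then contributes $-\int x(1+x)^\alpha|g|$. The gain term contributes $2\int_0^\infty|g(z)|\int_0^z(1+x)^\alpha\dd{x}\dd{z}\le\frac{2}{\alpha+1}\int (1+z)^{\alpha+1}|g(z)|\dd{z}$. For $\alpha\ge1$ we have $\frac{2}{\alpha+1}\le1$, so the top-order terms cancel up to lower-order contributions bounded by $C\norm{g}_{L^1_\alpha}$; this is where $\alpha\ge1$ matters. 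The coagulation part with $K_0=2$ is treated the same way: testing $\mathcal{C}_2(f^\eps+f^0,g)$ against $\sgn(g)(1+x)^\alpha$ and using $(1+x+y)^\alpha\le C_\alpha\bigl((1+x)^\alpha+(1+y)^\alpha\bigr)$ gives a bound $C(\norm{f^\eps_s}_{L^1_\alpha}+\norm{f^0_s}_{L^1_\alpha})\norm{g_s}_{L^1_\alpha}$. Its loss part is absorbed by its sign.

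The source terms are of order $\eps$. Proposition~\ref{Prop:bound:CK} with $\norm{W}_\infty\le1$ gives $\eps\norm{\mathcal{C}_W(f^\eps_s,f^\eps_s)}_{L^1_\alpha}\le\frac{3\eps}{2}\norm{f^\eps_s}_{L^1_\alpha}^2$, and Proposition~\ref{Prop:bound:FV} with $\nu_*=1$ gives $\eps\norm{\mathcal{F}_V(f^\eps_s)}_{L^1_\alpha}\le\frac{3\eps}{2}\norm{f^\eps_s}_{L^1_\alpha}$. By Proposition~\ref{Prop:moments:evolution}, which applies uniformly for $\eps\in[0,1]$ since $2\le K_\eps\le3$ and $2\le F_\eps\le 2+\frac{1}{x+y}$, together with the bound on $M_0$, the norms $\norm{f^\eps_s}_{L^1_\alpha}$ and $\norm{f^0_s}_{L^1_\alpha}$ are bounded uniformly in $s$ and $\eps$ by a constant $\Lambda$ depending only on $f_0$. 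This yields
\begin{equation*}
 \frac{\dd}{\dd{t}}\norm{g_t}_{L^1_\alpha}\le A\norm{g_t}_{L^1_\alpha}+\eps B,\qquad g_0=0,
\end{equation*}
and Gronwall's inequality gives $\norm{g_t}_{L^1_\alpha}\le\eps\frac{B}{A}(\ee^{At}-1)$, which is the claim with $C=B/A$.

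If the cancellation in the fragmentation term turns out to be delicate for the specific $\alpha$, the fallback is to require $f_0\in L^1_{\alpha+1}$. The extra moment is then controlled uniformly by Proposition~\ref{Prop:moments:evolution}, and the same argument can be run without needing the cancellation.
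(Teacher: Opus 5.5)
Your proof is correct and follows the paper's route. You test the difference equation against $\sgn(f_t^\eps-f_t^0)(1+x)^\alpha$, bound the coagulation and $O(\eps)$ terms via Propositions~\ref{Prop:bound:CK} and \ref{Prop:bound:FV} with the uniform moment bounds, use the cancellation in the constant fragmentation term, and close with Gronwall. For that cancellation, the paper works in weak form with $(1+x)^\alpha+(1+y)^\alpha-(1+x+y)^\alpha\le 1$ to get a bound by $\norm{g}_{L^1_1}$, while you compute the gain term explicitly using $\frac{2}{\alpha+1}\le 1$; both work for $\alpha\ge 1$.

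Your fallback is unnecessary, and on its own it would not give a bound linear in $\eps$. Uniform control of the $(\alpha+1)$-moments only bounds $\norm{g}_{L^1_{\alpha+1}}$ by a constant, so the cancellation you verified is the essential step.
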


\begin{proof}
We note that 
\begin{equation*}
 \mathcal{C}_{2+\eps W}(f_s^\eps,f_s^\eps)-\mathcal{C}_{2}(f_s^0,f_s^0)=\mathcal{C}_{2}(f_s^\eps-f_s^0,f_s^\eps+f_s^0)+\eps \mathcal{C}_{W}(f_s^\eps,f_s^\eps)
\end{equation*}
as well as
\begin{equation*}
 \mathcal{F}_{2+\eps V}(f_s^\eps)-\mathcal{F}_{2}(f_s^0)=\mathcal{F}_{2}(f_s^\eps-f_s^0)+\eps \mathcal{F}_{V}(f_s^\eps).
\end{equation*}
Thus, taking the difference of \eqref{eq:ren:sol} for $\eps$ and $\eps=0$, we obtain
\begin{multline}\label{eq:stab:1}
 \frac{\dd}{\dd{t}}\int_{0}^{\infty}(f_t^\eps-f_t^0)(x)\varphi(x)\dd{x}\\*
 =\int_{0}^{\infty}\Bigl(\mathcal{C}_{2}(f_t^\eps-f_t^0,f_t^\eps+f_t^0)(x)+\mathcal{F}_{2}(f_t^\eps-f_t^0)(x)+\eps \bigl(\mathcal{F}_{V}(f_t^\eps)(x)+\mathcal{C}_{W}(f_t^\eps,f_t^\eps)(x)\bigr)\Bigr)\varphi(x)\dd{x}.
\end{multline}
Choosing $\varphi(x)=\sgn(f_t^\eps(x)-f_t^0(x))(1+x)^{\alpha}$ and noting that $\abs{\varphi(x)}\leq (1+x)^\alpha$ we deduce from Propositions~\ref{Prop:bound:CK} and \ref{Prop:bound:FV} that 
\begin{multline}\label{eq:stab:2}
 \abs[\bigg]{\int_{0}^{\infty}\Bigl(\mathcal{C}_{2}(f_t^\eps-f_t^0,f_t^\eps+f_t^0)(x)+\eps \bigl(\mathcal{F}_{V}(f_t^\eps)(x)+\mathcal{C}_{W}(f_t^\eps,f_t^\eps)(x)\bigr)\Bigr)\varphi(x)\dd{x}}\\*
 \leq \frac{3}{2}\norm{f_t^\eps+f_t^0}_{L_\alpha^1}\norm{f_t^\eps-f_t^0}_{L_\alpha^1}+\eps\bigl(\norm{f_t^\eps}_{L_\alpha^1}+\norm{f_t^\eps}_{L_\alpha^1}^2\bigr).
\end{multline}
Moreover, from the weak formulation \eqref{eq:coag:frag:weak} together with $(f_t^\eps-f_t^0)\sgn(f_t^\eps-f_t^0)=\abs{f_t^\eps-f_t^0}$ we get
\begin{multline}\label{eq:stab:3}
 \int_{0}^{\infty}\mathcal{F}_{2}(f)(x)\varphi(x)\dd{x}\\*
 \leq\int_{0}^{\infty}\int_{0}^{\infty}\abs[\big]{(f_t^\eps-f_t^0)(x+y)}\bigl[-(1+x+y)^\alpha+(1+x)^\alpha+(1+y)^\alpha\bigr]\dd{x}\dd{y}.
\end{multline}
To estimate the right-hand side further we note that for $\alpha>1$ it holds
\begin{multline*}
(1+x)^\alpha -1 - (1+x+y)^\alpha + (1+y)^\alpha\\*
= \alpha \int_1^{1+x}  z^{\alpha-1} \dd{z} - \alpha \int_{1+y}^{1+x+y}  z^{\alpha-1}\dd{z}=\alpha \int_1^{1+x} z^{\alpha-1} - (z + y) ^{\alpha-1} \dd{z}\leq 0.
\end{multline*}
Thus, we deduce from \eqref{eq:stab:3} together with Fubini's theorem that
\begin{multline}\label{eq:stab:4}
 \int_{0}^{\infty}\mathcal{F}_{2}(f)(x)\varphi(x)\dd{x}\leq\int_{0}^{\infty}\int_{0}^{\infty}\abs[\big]{(f_t^\eps-f_t^0)(x+y)}\dd{x}\dd{y}\\*
 =\int_{0}^{\infty}\int_{0}^{x}\abs[\big]{(f_t^\eps-f_t^0)(x)}\dd{y}\dd{x}=\norm{f_t^\eps-f_t^0}_{L_1^1}\leq \norm{f_t^\eps-f_t^0}_{L_\alpha^1}.
\end{multline}
Combining this estimate with \eqref{eq:stab:1} and \eqref{eq:stab:2} and recalling $\varphi(x)=\sgn(f_t^\eps(x)-f_t^0(x))(1+x)^{\alpha}$ we get 
\begin{equation*}
 \frac{\dd}{\dd{t}}\norm{f_t^\eps-f_t^0}_{L_\alpha^1}\leq \Bigl(\frac{3}{2}\norm{f_t^\eps+f_t^0}_{L_\alpha^1}+1\Bigr)\norm{f_t^\eps-f_t^0}_{L_\alpha^1}+\eps\bigl(\norm{f_t^\eps}_{L_\alpha^1}+\norm{f_t^\eps}_{L_\alpha^1}^2\bigr).
\end{equation*}
Proposition~\ref{Prop:moments:evolution} yields that there exists a constant $C_\alpha=C_\alpha(f_0)>0$ such that
\begin{equation*}
 \frac{\dd}{\dd{t}}\norm{f_t^\eps-f_t^0}_{L_\alpha^1}\leq C_\alpha\norm{f_t^\eps-f_t^0}_{L_\alpha^1}+C_\alpha\eps.
\end{equation*}
The claim then follows from Gronwall's inequality.
\end{proof}

The next statement is a slight variation of Proposition~\ref{prop_st_stab_sol} which provides stability of solutions to \eqref{eq:coag:frag} with the same kernel but different initial condition.
\begin{proposition}\label{Prop:stability:2} 
Let $K_\eps$ and $F_\eps$ satisfy \eqref{eq:ass:kernels} and let $f_t^\eps$, $g_t^\eps$ denote solutions to \eqref{eq:coag:frag} for $\eps\in[0,1]$ with initial data $f_0,g_0\in L_\alpha^1$ for $\alpha\geq 1$. There exists a constant $B>0$ which are independent of $\eps$ such that 
\begin{equation*}
\norm{f_t^\eps (t) - g_t^\eps (t)}_{L^1_\alpha} \leq \norm{f_0^\eps-g_0^\eps}_{L^1_\alpha}\ee^{Bt}. 
\end{equation*}
\end{proposition}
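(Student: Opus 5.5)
We follow the proof of Proposition~\ref{prop_st_stab_sol} almost verbatim, but without the perturbative source term, since both solutions now evolve with the same kernels $K_\eps$ and $F_\eps$. Set $h_t=f_t^\eps-g_t^\eps$. By bilinearity and symmetry of $\mathcal{C}_{K_\eps}$,
\begin{equation*}
 \mathcal{C}_{K_\eps}(f_t^\eps,f_t^\eps)-\mathcal{C}_{K_\eps}(g_t^\eps,g_t^\eps)=\mathcal{C}_{K_\eps}(h_t,f_t^\eps+g_t^\eps).
\end{equation*}
By linearity, $\mathcal{F}_{F_\eps}(f_t^\eps)-\mathcal{F}_{F_\eps}(g_t^\eps)=\mathcal{F}_{2}(h_t)+\eps\mathcal{F}_V(h_t)$. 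Taking the difference of the weak/renormalised formulations \eqref{eq:ren:sol} with test function $\varphi(x)=\sgn(h_t(x))(1+x)^\alpha$ gives a differential inequality for $\norm{h_t}_{L^1_\alpha}$. Rigorously, one approximates $\sgn$ by smooth bounded $\beta'$ as is standard for renormalised solutions.

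The three contributions are bounded as follows.
\begin{itemize}
\item The coagulation term is controlled by Proposition~\ref{Prop:bound:CK} with $\norm{K_\eps}_{L^\infty}\leq 2+\eps\leq 3$. This yields $\tfrac{9}{2}\norm{f_t^\eps+g_t^\eps}_{L^1_\alpha}\norm{h_t}_{L^1_\alpha}$.
\item The $\eps\mathcal{F}_V(h_t)$ term is bounded by Proposition~\ref{Prop:bound:FV} with $\nu_*=1$, giving $\tfrac{3}{2}\eps\norm{h_t}_{L^1_\alpha}$.
\item The main fragmentation term $\int\mathcal{F}_2(h_t)\varphi$ is handled exactly as in \eqref{eq:stab:3}--\eqref{eq:stab:4}. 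The negative part $-(1+x+y)^\alpha$ of the weight bracket is paired with $\abs{h_t(x+y)}$, and one uses $(1+x)^\alpha+(1+y)^\alpha-(1+x+y)^\alpha\leq 1$. This gives the bound $\norm{h_t}_{L^1_1}\leq\norm{h_t}_{L^1_\alpha}$.
\end{itemize}

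Collecting terms,
\begin{equation*}
 \frac{\dd}{\dd{t}}\norm{h_t}_{L^1_\alpha}\leq\Bigl(\tfrac{9}{2}\norm{f_t^\eps+g_t^\eps}_{L^1_\alpha}+\tfrac{5}{2}\Bigr)\norm{h_t}_{L^1_\alpha}.
\end{equation*}
The kernels $K_\eps,F_\eps$ satisfy the hypotheses of Proposition~\ref{Prop:moments:evolution} uniformly for $\eps\in[0,1]$, with $a_*=b_*=2$, $A_*=3$ and $B_*=2$. Hence $\norm{f_t^\eps}_{L^1_\alpha}$ and $\norm{g_t^\eps}_{L^1_\alpha}$ are bounded uniformly in $t$ and $\eps$ by a constant depending on $\norm{f_0}_{L^1_\alpha}$, $\norm{g_0}_{L^1_\alpha}$ and the masses. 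The bounds for $M_0$ and $M_\alpha$ suffice, since $(1+x)^\alpha\lesssim 1+x^\alpha$. Thus the bracket is bounded by a constant $B$ independent of $\eps$, and Gronwall's inequality yields the claim.

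The main obstacle is the fragmentation term: $\mathcal{F}_2$ is not bounded with a constant independent of the weight in a way that is useful here, so one cannot simply invoke an operator norm. Instead one must exploit the sign structure of the weak form, namely that the dangerous loss term carries $\abs{h_t}$ with a favourable sign. Because $\varphi$ is the sign of $h_t$ evaluated at $x$ rather than at $x+y$, the gain terms must be bounded crudely by absolute values. This is precisely what \eqref{eq:stab:3} does, and I would reuse that computation directly. A minor point is that $B$ depends on the initial data through the moment bounds, which is implicit in the statement.
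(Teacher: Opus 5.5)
Your proof is correct and follows the same route as the paper. You write the difference equation as $\mathcal{C}_{K_\eps}(h_t,f_t^\eps+g_t^\eps)+\mathcal{F}_2(h_t)+\eps\mathcal{F}_V(h_t)$, bound the coagulation and $\mathcal{F}_V$ parts by Propositions~\ref{Prop:bound:CK} and~\ref{Prop:bound:FV}, handle the unbounded $\mathcal{F}_2$ part via the sign argument of \eqref{eq:stab:3}--\eqref{eq:stab:4}, and close with the uniform moment bounds of Proposition~\ref{Prop:moments:evolution} and Gronwall. Your version is somewhat more careful than the paper's sketch: it has the correct signs on the fragmentation terms, makes explicit that the kernel bounds hold uniformly for $\eps\in[0,1]$, and notes that $B$ depends on the initial data.
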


\begin{proof}
 The proof follows essentially the same lines of the proof of Proposition~\ref{prop_st_stab_sol} noting that 
 \begin{equation*}
  \frac{\dd}{\dd{t}}(f_t^\eps-g_t^\eps)=\mathcal{C}_{K_\eps}(f_t^\eps-g_t^\eps,f_t^\eps+g_t^\eps)-\mathcal{F}_{2}(f_t^\eps-g_t^\eps)-\eps\mathcal{F}_{V}(f_t^\eps-g_t^\eps).
 \end{equation*}
Arguing as in \eqref{eq:stab:4} and recalling Propositions \ref{Prop:bound:CK} and \ref{Prop:bound:FV} we get
 \begin{equation*}
  \frac{\dd}{\dd{t}}\norm{f_t^\eps-g_t^\eps}_{L_\alpha^1}\leq B\norm{f_t^\eps-g_t^\eps}_{L_\alpha^1}
 \end{equation*}
 for a constant $B>0$. Gronwall's inequality thus concludes the proof.
\end{proof}

\subsection{Stability of equilibria}

In this section, following ideas from \cite{CaT21}, we provide stability estimates on equilibria of \eqref{eq:coag:frag}. More precisely, we will show that the distance of equilibria to \eqref{eq:coag:frag} for $\eps>0$ and $\eps=0$ is of order $\eps$. As a preliminary step we prove the following statement which provides stability up to order $\eps^\gamma$ for some $0<\gamma<1$.

\begin{proposition}\label{Prop:stab:equilibria}
There exist positive numbers $C_*, \gamma$ such that 
\begin{equation*}
\lVert Q_\epsilon  - Q_0  \rVert_{L^1_\alpha} \leq C_* \epsilon^ \gamma 
\end{equation*}
for each stationary solution $Q_\eps$ to \eqref{eq:coag:frag} with $\eps\geq 0$ while $Q_0=\ee^{-\frac{x}{\sqrt{\rho}}}$.
\end{proposition}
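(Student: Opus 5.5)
The plan follows \cite{CaT21}. Since $Q_\eps$ is stationary for the $\eps$-problem, the solution $f_t^\eps$ with initial datum $f_0=Q_\eps$ satisfies $f_t^\eps=Q_\eps$ for all $t\geq 0$. Let $f_t^0$ be the solution of the constant-kernel problem ($K=F=2$) with the same initial datum $Q_\eps$. Its mass is $M_1(Q_\eps)=\rho$, so Theorem~\ref{Thm:AB:weight} applies to it. For every $t\geq 0$ the triangle inequality gives
\begin{equation*}
 \norm{Q_\eps-Q_0}_{L^1_\alpha}\leq \norm{f_t^\eps-f_t^0}_{L^1_\alpha}+\norm{f_t^0-Q_0}_{L^1_\alpha}\leq \eps C(\ee^{At}-1)+C'\ee^{-\lambda t}.
\end{equation*}
The first term is bounded by Proposition~\ref{prop_st_stab_sol}, the second by Theorem~\ref{Thm:AB:weight}. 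Here $\lambda=\frac{\alpha_*-\alpha}{\alpha_*}\min\{M_0(Q_\eps),\sqrt{\rho}\}$ for some fixed $\alpha_*>\alpha$. We then choose $t=\frac{1}{A+\lambda}\abs{\ln\eps}$. With this choice both terms are of order $\eps^{\gamma}$ with $\gamma=\lambda/(A+\lambda)\in(0,1)$, which is the claimed estimate. For $\eps$ bounded away from zero the estimate is trivial, because both $Q_\eps$ and $Q_0$ are uniformly bounded in $L^1_\alpha$ by Proposition~\ref{Prop:moments:equi}.

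The substance of the argument is that all constants can be taken independent of $\eps$ and of the particular equilibrium $Q_\eps$. I will check this in three steps.

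First, the constants $C,A$ in Proposition~\ref{prop_st_stab_sol} depend on $f_0$ only through the moment bounds of Proposition~\ref{Prop:moments:evolution}. With $f_0=Q_\eps$ these bounds become $\max\{M_m(Q_\eps),\mu_m\}$, and Proposition~\ref{Prop:moments:equi} controls $M_m(Q_\eps)$ uniformly in $\eps\in(0,\eps_*)$. The same reasoning bounds $\norm{Q_\eps}_{L^1_{\alpha_*}}$ uniformly.

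Second, we need a uniform positive lower bound on $M_0(Q_\eps)$, which keeps $\lambda$ bounded away from zero. I would take $\varphi\equiv1$ in \eqref{eq:equi:weak}. The coagulation side is at most $\frac{2+\eps}{2}M_0^2$, and the fragmentation side is at least $\int_0^\infty xQ_\eps\dd{x}=\rho$. This gives $M_0^2\geq 2\rho/(2+\eps)$.

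Third, the constant in Theorem~\ref{Thm:AB} depends increasingly on $\abs{\int Q_\eps\log Q_\eps}$, so this entropy must be bounded uniformly. I expect this to be the main obstacle. The negative part of $Q_\eps\log Q_\eps$ is harmless: by the standard estimate $Q\log(1/Q)\leq Q\,x+\ee^{-x/2}$-type bounds, it is controlled through the moments $M_0$ and $M_1$. For the positive part I need a uniform bound such as $\norm{Q_\eps}_{L^\infty}\leq C$. I would try to obtain it from the stationary equation written pointwise,
\begin{equation*}
 Q_\eps(x)\Bigl(\int_0^\infty K_\eps(x,y)Q_\eps(y)\dd{y}+\frac12\int_0^xF_\eps(y,x-y)\dd{y}\Bigr)=\frac12\int_0^xK_\eps Q_\eps Q_\eps\dd{y}+\int_0^\infty F_\eps(x,y)Q_\eps(x+y)\dd{y}.
\end{equation*}
The bracket on the left is at least $2M_0(Q_\eps)\geq c>0$. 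On the right, the fragmentation term is at most $(2+\eps/x)M_0$. The coagulation term $\frac12\int_0^x K_\eps Q_\eps Q_\eps\dd{y}$ is not controlled by $L^1$ norms alone, since it is a convolution. I would first try to close the estimate with a bootstrap on $\sup_{x\leq R}Q_\eps$, or alternatively work in $L^2$ via a convolution inequality. The $\eps/x$ singularity near $0$ is compensated on the left side by the term $\frac12\int_0^x F_\eps\,\dd{y}\geq x$, which keeps the estimate near the origin under control.

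If this regularity step fails, the fallback is to replace Theorem~\ref{Thm:AB} with a version whose constant depends only on moments. One way to get such a version is to regularize the initial datum by a short-time flow, which would yield some entropy control. This fallback would only change $\gamma$, not the structure of the argument.
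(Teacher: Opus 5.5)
Your main line is the paper's proof. The paper also runs the $\eps=0$ flow from $Q_\eps$, bounds $\norm{Q_\eps-f_t}_{L^1_\alpha}$ by Proposition~\ref{prop_st_stab_sol} and $\norm{f_t-Q_0}_{L^1_\alpha}$ by Theorem~\ref{Thm:AB:weight}, and balances $\eps\ee^{Ct}$ against $\ee^{-\mu t}$. It uses the exact minimiser $t=\frac{1}{C+\mu}\log\frac{\mu}{\eps C}$ instead of your $\abs{\ln\eps}/(A+\lambda)$, which changes nothing. The paper simply treats $C$ and $\mu$ as independent of $\eps$ and of $Q_\eps$. Your first two checks are correct and supply exactly what it leaves implicit: uniform moments via Proposition~\ref{Prop:moments:equi}, and $M_0(Q_\eps)^2\geq 2\rho/(2+\eps)$ from $\varphi\equiv1$ in \eqref{eq:equi:weak}. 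One small caveat: your ``trivial'' range only covers $\eps\leq\eps_*$, because Proposition~\ref{Prop:moments:equi} is not uniform as $\eps\to\infty$. For $W\equiv1$, $V\equiv0$ the equilibrium $\frac{2}{2+\eps}\ee^{-\beta x}$ with $\beta^2=\frac{2}{(2+\eps)\rho}$ has $M_m\sim\eps^{(m-1)/2}$. So the statement must be read for bounded $\eps$, as the paper implicitly does.

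Your third step is where the proposal goes wrong. The paper's proof never controls $\int Q_\eps\log Q_\eps$, nor checks $Q_\eps\log Q_\eps\in L^1$, which is a hypothesis of Theorem~\ref{Thm:AB}. So you have spotted a genuine omission, but a uniform $L^\infty$ bound is not available, because $Q_\eps$ is in general unbounded. Take $W\equiv0$, $V(x,y)=\frac{1}{x+y}$; the stationary equation becomes
\begin{equation*}
 \Bigl(2M_0+x+\frac{\eps}{2}\Bigr)Q_\eps(x)=(Q_\eps*Q_\eps)(x)+2\int_x^\infty Q_\eps(z)\dd{z}+\eps\int_x^\infty\frac{Q_\eps(z)}{z}\dd{z}.
\end{equation*}
All terms on the right are non-negative, so $Q_\eps\geq c>0$ a.e.\ on some $(0,x_0)$. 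Then the last term is at least $\eps c\log(x_0/x)$, whence $\norm{Q_\eps}_{L^\infty}=\infty$ for every $\eps>0$ (heuristically $Q_\eps(x)\sim x^{-\kappa}$ with $\kappa=\frac{2\eps}{4M_0+\eps}$). The term $\frac12\int_0^xF_\eps(y,x-y)\dd{y}$ that you want to use lies in $[x,x+\eps/2]$, so it compensates nothing at the origin.

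What suffices is an $L^p$ bound with $p>1$, since $(Q\log Q)^+\leq Q^2$, and your $L^2$ alternative does close. Use the loss term $\geq 2M_0Q_\eps$, Young's inequality $\norm{Q_\eps*Q_\eps}_{L^2}\leq M_0\norm{Q_\eps}_{L^2}$, Hardy's inequality $\norm[\big]{\int_x^\infty Q_\eps(z)z^{-1}\dd{z}}_{L^2}\leq 2\norm{Q_\eps}_{L^2}$, and $\norm[\big]{\int_x^\infty Q_\eps(z)\dd{z}}_{L^2}^2\leq M_0\rho$. This gives $\bigl((1-\frac{\eps}{2})M_0-2\eps\bigr)\norm{Q_\eps}_{L^2}\leq 2\sqrt{M_0\rho}$, a uniform bound for small $\eps$. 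The one point that still needs an argument is the a priori finiteness of $\norm{Q_\eps}_{L^2}$, which this absorption presupposes.
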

\begin{proof}
Consider the $f_t$ satisfying 
\begin{equation*}
\partial_t f_t = \mathcal{C}_2 (f_t, f_t) + \mathcal{F}_2 (f_t) \ \ \text{with} \ f_0 (0) = Q_\epsilon. 
\end{equation*}
Since $Q_\eps$ is a stationary solution, we know that 
\begin{equation*}
\partial_t Q_\epsilon = \mathcal{C}_{2+\epsilon W} (Q_\epsilon , Q_\epsilon ) + \mathcal{F}_{2+\epsilon V} (Q_\epsilon ) \ \ \text{with} \ Q_\epsilon (0) = Q_\epsilon. 
\end{equation*}
Therefore, by Proposition \ref{prop_st_stab_sol}, we have 
\begin{equation*}
\lVert Q_\epsilon - f_t \rVert_{L^1_\alpha} \leq \epsilon C  \exp{(Ct)}. 
\end{equation*}
Again, by the exponential convergence to the equilibrium for the case of constant kernel in Theorem~\ref{Thm:AB:weight}, we get that 
\begin{equation*}
\lVert f_t - Q_0  \rVert_{L^1_\alpha} \leq C \exp{(- \mu t)} \ \ \text{for a $\mu >0$.}
\end{equation*}
Consequently, we obtain 
\begin{equation*}
\lVert Q_\epsilon  - Q_0  \rVert_{L^1_\alpha} \leq \lVert Q_\epsilon - f_t \rVert_{L^1_\alpha} + \lVert f_t - Q_0  \rVert_{L^1_\alpha} \\
\leq  \epsilon C  \exp{(Ct)}  +  C \exp{(- \mu t)} \ \  \forall t \in [0,T].
\end{equation*}
To get the optimal bound, we minimise $P (t)  = \epsilon C  \exp{(Ct)}  +  C \exp{(- \mu t)}$. Clearly, $P^\prime (t)$ vanishes at $t =\frac{1}{C + \mu} \log \frac{\mu}{\epsilon C}$. Hence, $P$ attains its minimum at $t =\frac{1}{C + \mu} \log \frac{\mu}{\epsilon C}$. A straight forward calculation yields that 
\begin{equation*}
P \left( \frac{1}{C + \mu} \log \frac{\mu}{\epsilon C}\right) = 2 C \left ( \frac{\mu}{C}\right)^{\frac{C}{C + \mu}} \epsilon ^{\frac{\mu}{C + \mu}} . 
\end{equation*}
\end{proof} 

Based on Proposition~\ref{Prop:stab:equilibria} we can prove the following improved stability result providing at most linear growth of the error with respect to $\eps$.

\begin{proposition}\label{Prop:improved:stability:equilibria}
 Let $K_\eps$ and $F_\eps$ satisfy \eqref{eq:ass:kernels}. For each $\alpha\geq 1$ there exists a constant $C>0$ such that $\norm{Q_\eps-Q_0}_{L^1_\alpha}\leq C\eps$ for each equilibrium $Q_\eps$ of \eqref{eq:coag:frag} with total mass $\rho$ and all $\eps\geq 0$  while $Q_0=\ee^{-x/\sqrt{\rho}}$.
\end{proposition}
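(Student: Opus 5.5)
The plan is to bootstrap from Proposition~\ref{Prop:stab:equilibria} using the spectral gap of $\LL_0$ from Proposition~\ref{Prop:spectral:gap:L0}. Write $h=Q_\eps-Q_0$. Both have total mass $\rho$, so $h$ lies in $L_\alpha^1\cap\{\int xh=0\}$, where $\LL_0$ is invertible. Subtract the stationary equation for $Q_0$ (with kernels $2,2$) from that for $Q_\eps$. Using bilinearity and symmetry of $\mathcal{C}$ and linearity of $\mathcal{F}$ in both the kernel and the function,
\begin{equation*}
0=\mathcal{C}_{2}(Q_\eps,Q_\eps)-\mathcal{C}_2(Q_0,Q_0)+\mathcal{F}_2(h)+\eps\bigl(\mathcal{C}_W(Q_\eps,Q_\eps)+\mathcal{F}_V(Q_\eps)\bigr),
\end{equation*}
and $\mathcal{C}_2(Q_\eps,Q_\eps)-\mathcal{C}_2(Q_0,Q_0)=2\mathcal{C}_2(Q_0,h)+\mathcal{C}_2(h,h)$. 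This gives the identity
\begin{equation*}
\LL_0 h=-\mathcal{C}_2(h,h)-\eps\bigl(\mathcal{C}_W(Q_\eps,Q_\eps)+\mathcal{F}_V(Q_\eps)\bigr).
\end{equation*}

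Next I check that the right-hand side has zero first moment, so that $\LL_0^{-1}$ can be applied. This follows from the weak forms \eqref{eq:coag:frag:op:weak} with $\varphi(x)=x$, using the moment bounds of Proposition~\ref{Prop:moments:equi}, which make all quantities in $L^1_{\alpha+1}$ finite. Applying $\LL_0^{-1}$ together with Propositions~\ref{Prop:bound:CK} and \ref{Prop:bound:FV} (with $\nu_*=1$, $\norm{W}_\infty\le1$) yields
\begin{equation*}
\norm{h}_{L^1_\alpha}\le \frac{C}{2\sqrt\rho}\Bigl(3\norm{h}_{L^1_\alpha}^2+\eps\bigl(\tfrac32\norm{Q_\eps}_{L^1_\alpha}^2+\tfrac32\norm{Q_\eps}_{L^1_\alpha}\bigr)\Bigr).
\end{equation*}
By Proposition~\ref{Prop:moments:equi}, $\norm{Q_\eps}_{L^1_\alpha}$ is bounded uniformly in $\eps\in(0,\eps_*)$, so $\norm{h}\le C_1\norm{h}^2+C_2\eps$.

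The quadratic term is the main obstacle: this inequality alone does not force $\norm{h}$ to be small. Here I use Proposition~\ref{Prop:stab:equilibria}: $\norm{h}_{L^1_\alpha}\le C_*\eps^\gamma$. For $\eps\le\eps_0$ chosen so that $C_1C_*\eps_0^\gamma\le\frac12$, we get $C_1\norm{h}^2\le\frac12\norm{h}$, and the quadratic term can be absorbed, giving $\norm{h}_{L^1_\alpha}\le 2C_2\eps$. For $\eps\ge\eps_0$, the trivial bound $\norm{h}\le\norm{Q_\eps}+\norm{Q_0}\le C\le (C/\eps_0)\eps$ suffices. This uses the uniform moment bounds, which are stated for $\eps$ in a bounded range; for arbitrary $\eps$ one restricts to $\eps\le\eps_*$, as implicitly done in the statement.

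A minor technical point is that Proposition~\ref{Prop:spectral:gap:L0} requires $\alpha\ge1$, which matches the statement. Justifying the stationary identity in the strong $L^1_\alpha$ sense, rather than weakly, is routine given the operator bounds.
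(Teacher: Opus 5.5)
Your proposal is correct and follows the paper's proof. It uses the same identity $\LL_0 h=-\mathcal{C}_2(h,h)-\eps\bigl(\mathcal{C}_W(Q_\eps,Q_\eps)+\mathcal{F}_V(Q_\eps)\bigr)$, the same inversion of $\LL_0$ via Proposition~\ref{Prop:spectral:gap:L0} giving $\norm{h}_{L^1_\alpha}\le C_1\norm{h}_{L^1_\alpha}^2+C_2\eps$, and Proposition~\ref{Prop:stab:equilibria} to close. The only difference is that you absorb the quadratic term in one step once $C_1C_*\eps^\gamma\le\frac12$, whereas the paper inserts $\norm{h}_{L^1_\alpha}\le C_*\eps^\gamma$ and iterates the resulting $\eps^{\min\{2\gamma,1\}}$ bound finitely many times, so your closing step is slightly more direct.
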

\begin{proof}
Note that $\gamma\leq 1$ in Proposition~\ref{Prop:stab:equilibria} and thus it suffices to consider $\eps\leq 1$. Since $Q_\eps$ and $Q_0$ are equilibria of \eqref{eq:coag:frag} we have
 \begin{equation*}
  0=\mathcal{C}_{2+\eps W}(Q_\eps,Q_\eps)+\mathcal{F}_{2+\eps V}(Q_\eps)-\mathcal{C}_{2}(Q_0,Q_0)-\mathcal{F}_{2}(Q_0).
 \end{equation*}
Exploiting that $\mathcal{C}_K(f,f)$ is bilinear with respect to $f$, linear in $K$ and symmetric while $\mathcal{F}_F(f)$ is linear in both $F$ and $f$, the equation can be rearranged to obtain
\begin{equation*}
 0=\mathcal{C}_{2}(Q_\eps-Q_0,Q_\eps-Q_0)+2\mathcal{C}_{2}(Q_0,Q_\eps-Q_0)+\mathcal{F}_{2}(Q_\eps-Q_0)+\eps\bigl(\mathcal{F}_{V}(Q_\eps)+\mathcal{C}_{W}(Q_\eps,Q_\eps)\bigr).
\end{equation*}
Recalling that $\LL_0 h=2\mathcal{C}_{2}(Q_0,h)+\mathcal{F}_{2}(h)$ we can further rewrite this as
\begin{equation*}
 \LL_0 (Q_\eps-Q_0)=-\mathcal{C}_{2}(Q_\eps-Q_0,Q_\eps-Q_0)-\eps\bigl(\mathcal{F}_{V}(Q_\eps)+\mathcal{C}_{W}(Q_\eps,Q_\eps)\bigr).
\end{equation*}
By means of Proposition~\ref{Prop:spectral:gap:L0} together with Propositions~\ref{Prop:bound:CK} and \ref{Prop:bound:FV} we get
\begin{equation*}
 \norm{Q_\eps-Q_0}_{L_\alpha^1}\leq C\norm{Q_\eps-Q_0}_{L_\alpha^1}^2+C(\norm{Q_\eps}_{L_\alpha^1}+\norm{Q_\eps}_{L_\alpha^1})\eps.
\end{equation*}
Proposition~\ref{Prop:moments:equi} further implies
\begin{equation}\label{eq:imp:stab:equi}
 \norm{Q_\eps-Q_0}_{L_\alpha^1}\leq C\norm{Q_\eps-Q_0}_{L_\alpha^1}^2+C_\alpha\eps.
\end{equation}
From Proposition~\ref{Prop:stab:equilibria} we thus conclude
\begin{equation*}
 \norm{Q_\eps-Q_0}_{L_\alpha^1}\leq C\eps^{2\gamma}+C_\alpha\eps\leq \max\{C,C_\alpha\}\eps^{\min\{2\gamma,1\}}.
\end{equation*}
The claim follows by iteratively using this estimate in \eqref{eq:imp:stab:equi} since $\gamma>0$.
\end{proof}

\section{Spectral gap for the linearised operator $\LL_\eps$}\label{Sec:spectral:gap}

We consider the linearisation $\LL_\eps$ of \eqref{eq:coag:frag} around an equilibrium $Q_\eps$ with fixed mass $\rho>0$ which is given by
\begin{equation*}
\LL_\eps h=2\mathcal{C}_{K_\eps}(Q_\eps,h)+\mathcal{F}_{F_\eps}(h).
\end{equation*}

A main ingredient to prove that the equilibrium to \eqref{eq:coag:frag} is unique and additionally attracts the solutions will be the following spectral gap result for the perturbed linearised operator $\LL_\eps$ which is a direct consequence of the bounded perturbation theorem.

\begin{proposition}[Spectral gap for $\LL_\eps$]\label{Prop:spectral:gap:Leps}
 Let $K_\eps$ and $F_\eps$ satisfy \eqref{eq:ass:kernels}. There exists $\eps_*>0$ and $c>0$ such that for each $\eps\in (0,\eps_*)$ the linearised operator $\LL_\eps$ generates a strongly continuous semi-group on $L_\alpha^1\cap\{f|\int_{0}^{\infty}xf(x)\dd{x}=0\}$ such that 
  \begin{equation*}
  \norm{\ee^{\LL_\eps t}h}_{L_\alpha^1}\leq C \norm{h}_{L_\alpha^1}\ee^{-(2\sqrt{\rho}-c\eps) t} \qquad \text{for all } t\geq 0.
 \end{equation*}
 In particular, by the Hille-Yosida Theorem, $\LL_\eps$ is invertible on $L_\alpha^1\cap\{f|\int_{0}^{\infty}xf(x)\dd{x}=0\}$ and $\norm{\LL_\eps^{-1}h}_{L_\alpha^1}\leq \frac{C}{2\sqrt{\rho}-c\eps}\norm{h}_{L_\alpha^1}$.
\end{proposition}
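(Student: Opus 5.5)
The plan is to write $\LL_\eps$ as a bounded perturbation of $\LL_0$ on the closed subspace $X_\alpha\vcc=L_\alpha^1\cap\{f\,|\,\int_0^\infty xf(x)\dd{x}=0\}$ and then apply the bounded perturbation theorem for semigroups. Using bilinearity of $\mathcal{C}_K$ in $(f,g)$ and linearity in $K$, together with linearity of $\mathcal{F}_F$ in $F$, I decompose
\begin{equation*}
 \LL_\eps h-\LL_0 h=2\mathcal{C}_{2}(Q_\eps-Q_0,h)+2\eps\,\mathcal{C}_{W}(Q_\eps,h)+\eps\,\mathcal{F}_{V}(h)=\vcc B_\eps h .
\end{equation*}

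\textbf{Step 1: $B_\eps$ is bounded of size $O(\eps)$ on $X_\alpha$.} First I check that $B_\eps$ maps $X_\alpha$ into itself. By the weak form \eqref{eq:coag:frag:op:weak} with $\varphi(x)=x$, each of $\mathcal{C}_K(f,g)$ and $\mathcal{F}_F(f)$ has zero first moment. Hence $\LL_0$, $\LL_\eps$ and $B_\eps$ all preserve the constraint $\int_0^\infty xh(x)\dd{x}=0$.

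For the size of $B_\eps$, Proposition~\ref{Prop:bound:CK} applied with $K=2$ and with $K=W$ (note $\norm{W}_{L^\infty}\leq 1$), together with Proposition~\ref{Prop:bound:FV} with $\nu_*=1$, gives
\begin{equation*}
 \norm{B_\eps h}_{L^1_\alpha}\leq \Bigl(6\norm{Q_\eps-Q_0}_{L^1_\alpha}+3\eps\norm{Q_\eps}_{L^1_\alpha}+\tfrac{3}{2}\eps\Bigr)\norm{h}_{L^1_\alpha}.
\end{equation*}
Proposition~\ref{Prop:improved:stability:equilibria} bounds the first term by $C\eps$, and Proposition~\ref{Prop:moments:equi} bounds $\norm{Q_\eps}_{L^1_\alpha}$ uniformly for $\eps\in(0,\eps_*)$. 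Therefore $\norm{B_\eps}_{X_\alpha\to X_\alpha}\leq c_1\eps$ with $c_1$ independent of $\eps$.

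\textbf{Step 2: bounded perturbation.} Proposition~\ref{Prop:spectral:gap:L0} gives $\norm{\ee^{\LL_0 t}}\leq C\ee^{-2\sqrt{\rho}t}$ on $X_\alpha$. The bounded perturbation theorem (Engel--Nagel, III.1.3) then says that $\LL_\eps=\LL_0+B_\eps$ generates a strongly continuous semigroup on $X_\alpha$ with
\begin{equation*}
 \norm{\ee^{\LL_\eps t}}\leq C\ee^{(-2\sqrt{\rho}+C\norm{B_\eps})t}.
\end{equation*}
This can also be seen directly: iterate the Duhamel formula $\ee^{\LL_\eps t}h=\ee^{\LL_0t}h+\int_0^t\ee^{\LL_0(t-s)}B_\eps\ee^{\LL_\eps s}h\dd{s}$ and apply Gronwall to $\ee^{2\sqrt\rho t}\norm{\ee^{\LL_\eps t}h}$. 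Setting $c=Cc_1$ gives the claimed decay. I choose $\eps_*$ small enough that $2\sqrt\rho-c\eps_*>0$.

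\textbf{Step 3: invertibility.} For $\eps<\eps_*$ the growth bound is negative, so $0$ lies in the resolvent set. By Hille--Yosida, or simply by the Laplace representation $\LL_\eps^{-1}h=-\int_0^\infty\ee^{\LL_\eps t}h\dd{t}$, we get $\norm{\LL_\eps^{-1}h}_{L^1_\alpha}\leq \frac{C}{2\sqrt\rho-c\eps}\norm{h}_{L^1_\alpha}$.

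\textbf{Main obstacle.} The delicate point is Step 1. It needs a uniform-in-$\eps$ $O(\eps)$ bound on $\norm{Q_\eps-Q_0}_{L^1_\alpha}$, which is exactly what Proposition~\ref{Prop:improved:stability:equilibria} supplies. It also needs the mass of $Q_\eps$ to equal that of $Q_0$, so that the same subspace $X_\alpha$ is used for both $\LL_0$ and $\LL_\eps$. A further subtlety is that $\LL_0$ is unbounded on $L^1_\alpha$ only in a mild sense: its fragmentation part is in fact bounded for constant kernels, because $\int_0^x 2\dd{y}=2x$ is absorbed by the weight. Either way, only the semigroup estimate from Proposition~\ref{Prop:spectral:gap:L0} is needed for the perturbation argument.
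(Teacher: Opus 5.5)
Your proposal is correct and follows essentially the same route as the paper: the identical splitting $\LL_\eps=\LL_0+2\mathcal{C}_{2}(Q_\eps-Q_0,\cdot)+\eps\bigl(2\mathcal{C}_{W}(Q_\eps,\cdot)+\mathcal{F}_{V}\bigr)$, the $O(\eps)$ operator bound from Propositions~\ref{Prop:bound:CK}, \ref{Prop:bound:FV}, \ref{Prop:moments:equi} and \ref{Prop:improved:stability:equilibria}, and the bounded perturbation theorem applied to Proposition~\ref{Prop:spectral:gap:L0}; your explicit check that the perturbation preserves the zero-mass subspace is a detail the paper leaves implicit. One side remark is wrong but harmless: $\mathcal{F}_{2}(h)(x)=-xh(x)+2\int_x^\infty h(z)\dd{z}$ is genuinely unbounded on $L^1_\alpha$ (the factor $x$ is not absorbed by the weight), so it is precisely the fragmentation part that makes $\LL_0$ unbounded, but your argument only uses the semigroup bound for $\LL_0$ and is unaffected.
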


\begin{proof}
 We note that 
 \begin{multline*}
  \LL_\eps h=2\mathcal{C}_{2+\eps W}(Q_\eps,h)+\mathcal{F}_{2+\eps V}(h)=2\mathcal{C}_{2}(Q_\eps,h)+\mathcal{F}_{2}(h)+\eps\bigl(2\mathcal{C}_{W}(Q_\eps,h)+\mathcal{F}_{V}(h)\bigr)\\*
  =2\mathcal{C}_{2}(Q_0,h)+\mathcal{F}_{2}(h)+2\mathcal{C}_{2}(Q_\eps-Q_0,h)+\eps\bigl(2\mathcal{C}_{W}(Q_\eps,h)+\mathcal{F}_{V}(h)\bigr)\\*
  =\LL_{0}h+2\mathcal{C}_{2}(Q_\eps-Q_0,h)+\eps\bigl(2\mathcal{C}_{W}(Q_\eps,h)+\mathcal{F}_{V}(h)\bigr).
 \end{multline*}
Due to Propositions~\ref{Prop:bound:CK}, \ref{Prop:bound:FV} and \ref{Prop:moments:equi} the map
\begin{equation*}
 h\mapsto 2\mathcal{C}_{2}(Q_\eps-Q_0,h)+\eps\bigl(2\mathcal{C}_{W}(Q_\eps,h)+\mathcal{F}_{V}(h)\bigr)
\end{equation*}
 defines a bounded operator on $L_\alpha^1$ for all $\alpha\geq 1$. Using additionally Proposition~\ref{Prop:improved:stability:equilibria} we get
 \begin{multline*}
  \norm{2\mathcal{C}_{2}(Q_\eps-Q_0,h)+\eps\bigl(2\mathcal{C}_{W}(Q_\eps,h)+\mathcal{F}_{V}(h)\bigr)}_{L_\alpha^1}\\*
  \leq C\bigl(\norm{Q_\eps-Q_0}_{L_\alpha^1}+\eps (1+\norm{Q_\eps}_{L_\alpha^1})\bigr)\norm{h}_{L_\alpha^1}\leq C\eps \norm{h}_{L_\alpha^1}.
 \end{multline*}
 The claim then follows from Proposition~\ref{Prop:spectral:gap:L0} together with the bounded perturbation theorem.
\end{proof}

\section{Uniqueness of equilibria}\label{Sec:uniqueness}

Based on the preparations above, will now give the proof of our first main statement, i.e.\@ that for sufficiently small $\eps>0$ the coagulation fragmentation equation with kernels $K_\eps$ and $F_\eps$ has at most one equilibrium (see also e.g.\@ \cite{MiM09,CaT21, Thr21,ABC25} where the same general idea has been used).

\begin{proof}[Proof of Theorem~\ref{Thm:uniqueness:equi}]
 Let $Q_{1,\eps}$ and $Q_{2,\eps}$ be both stationary solutions to \eqref{eq:coag:frag} with total mass $\rho$. We then have
 \begin{multline}\label{eq:unique:1}
  0=\mathcal{C}_{K_\eps}(Q_{1,\eps},Q_{1,\eps})+\mathcal{F_\eps}(Q_{1,\eps})-\mathcal{C}_{K_\eps}(Q_{2,\eps},Q_{2,\eps})-\mathcal{F_\eps}(Q_{2,\eps})\\*
  =\mathcal{C}_{K_\eps}(Q_{1,\eps}-Q_{2,\eps},Q_{1,\eps}-Q_{2,\eps})+2\mathcal{C}_{K_\eps}(Q_{1,\eps}-Q_{2,\eps},Q_{2,\eps})+\mathcal{F}_{F_\eps}(Q_{1,\eps}-Q_{2,\eps}).
 \end{multline}
 Thus, considering $\LL_{2,\eps}=2\mathcal{C}_{K_\eps}(h,Q_{2,\eps})+\mathcal{F}_{F_\eps}(h)$, i.e.\@ the linearisation around $Q_{2,\eps}$ we get
 \begin{equation*}
  \LL_{2,\eps}(Q_{1,\eps}-Q_{2,\eps})=-\mathcal{C}_{K_\eps}(Q_{1,\eps}-Q_{2,\eps},Q_{1,\eps}-Q_{2,\eps}).
 \end{equation*}
 Propositions~\ref{Prop:bound:CK} and \ref{Prop:spectral:gap:Leps} thus imply for sufficiently small $\eps$ that 
 \begin{equation}\label{eq:unique:2}
  \norm{Q_{1,\eps}-Q_{2,\eps}}_{L_{\alpha}^1}\leq C\norm{Q_{1,\eps}-Q_{2,\eps}}_{L_{\alpha}^1}^2.
 \end{equation}
 Moreover, due to Proposition~\ref{Prop:improved:stability:equilibria} we know 
 \begin{equation*}
  \norm{Q_{1,\eps}-Q_{2,\eps}}_{L_{\alpha}^1}\leq \norm{Q_{1,\eps}-Q_{0}}_{L_{\alpha}^1}+\norm{Q_{0}-Q_{2,\eps}}_{L_{\alpha}^1}\leq C\eps.
 \end{equation*}
Thus, for sufficiently small $\eps>0$ the estimate \eqref{eq:unique:2} can only hold if $\norm{Q_{1,\eps}-Q_{2,\eps}}_{L_{\alpha}^1}=0$.
\end{proof}

\section{Convergence to equilibrium}\label{Sec:convergence}

Relying on the results from the previous sections, we will now provide the proofs of our main statements on the convergence towards equilibrium which follow from similar arguments used in \cite{CaT21}[Proposition 8.1, Theorem 8.2]. However, in order to be self-contained, we provide the proofs. We first give the proof of the local convergence result.

\begin{proof}[Proof of Proposition~\ref{Prop:local:convergence}]
 Since $f_t$ and $Q_\eps$ both solve \eqref{eq:coag:frag}, we can take the difference which yields in the same way as in \eqref{eq:unique:1} that
 \begin{equation*}
  \partial_{t}(f_t-Q_\eps)=\LL_{\eps}(f_t-Q_\eps)+\mathcal{C}_{K_\eps}(f_t-Q_\eps,f_t-Q_\eps).
 \end{equation*}
Duhamel's formula then implies
\begin{equation*}
 (f_t-Q_\eps)(x)=\ee^{\LL_\eps t}(f_0-Q_\eps)+\int_{0}^{t}\ee^{\LL_{\eps}(t-s)}\mathcal{C}_{K_\eps}(f_s-Q_\eps,f_s-Q_\eps)\dd{s}.
\end{equation*}
By means of Propositions \ref{Prop:bound:CK} and \ref{Prop:spectral:gap:Leps} we thus deduce
\begin{equation*}
 \norm{f_t-Q_\eps}_{L_\alpha^1}\leq C\norm{f_0-Q_\eps}\ee^{-(2\sqrt{\rho}-c\eps)t}+C\int_{0}^{t}\ee^{-(2\sqrt{\rho}-c\eps)(t-s)}\norm{f_s-Q_\eps}_{L_\alpha^1}^2\dd{s}.
\end{equation*}
Gronwall's inequality applied to the square of the previous estimate yields
\begin{equation*}
\begin{split}
 \norm{f_t-Q_\eps}_{L_\alpha^1}&\leq \Bigl(\frac{1}{C\norm{f_0-Q_\eps}}-\frac{C^2}{{2\sqrt{\rho}-c\eps}}(1-\ee^{-(2\sqrt{\rho}-c\eps)t})\Bigr)^{-1}\ee^{-(2\sqrt{\rho}-c\eps)t}\\
 &\leq \Bigl(\frac{1}{C\norm{f_0-Q_\eps}}-\frac{C^2}{{2\sqrt{\rho}-c\eps_*}}\Bigr)^{-1}\ee^{-(2\sqrt{\rho}-c\eps)t}.
 \end{split}
\end{equation*}
If $\norm{f_0-Q_\eps}_{L_\alpha^1}\leq \frac{\lambda}{2C^3}$ we have $\frac{1}{C\norm{f_0-Q_\eps}}-\frac{C^2}{{2\sqrt{\rho}-c\eps_*}}\geq \frac{1}{2C\norm{f_0-Q_\eps}}$ which thus implies
\begin{equation*}
 \norm{f_t-Q_\eps}_{L_\alpha^1}\leq 2C\norm{f_0-Q_\eps}\ee^{-(2\sqrt{\rho}-c\eps)t}.
\end{equation*}
\end{proof}

We can finally prove the global convergence result given in Theorem~\ref{Thm:global:convergence}.

\begin{proof}[Proof of Theorem~\ref{Thm:global:convergence}]
 The convergence to equilibrium for constant kernels together with the stability of the flow and the equilibria allows to reduce the proof to Proposition~\ref{Prop:local:convergence}. In fact, if $f_t^0$ denotes the solution to ~\eqref{eq:coag:frag} with constant kernels and $Q_0$ the corresponding equilibrium, we have
 \begin{equation*}
  \norm{f_t^\eps-Q_\eps}_{L_\alpha^1}\leq \norm{f_t^\eps-f_t^0}_{L_\alpha^1}+\norm{f_t^0-Q_0}_{L_\alpha^1}+\norm{Q_0-Q_\eps}_{L_\alpha^1}.
 \end{equation*}
By means of Propositions \ref{prop_st_stab_sol}, \ref{Prop:stab:equilibria} and Theorem~\ref{Thm:AB:weight} we get
\begin{equation*}
   \norm{f_t^\eps-Q_\eps}_{L_\alpha^1}\leq C_1\eps \ee^{At}+C_2\ee^{-\frac{\alpha_*-\alpha}{\alpha_*}\min\{M_0(f_0),\sqrt{\rho}\}t}+C_3\eps.
\end{equation*}
Let $\delta>0$ be given by Proposition~\ref{Prop:local:convergence}. Choosing first $t_*>0$ sufficiently large such that $C_2\ee^{-\frac{\alpha_*}{\alpha_*-\alpha}\min\{M_0(f_0),\sqrt{\rho}\}t}<\delta/3$ and then $\eps_*>0$ sufficiently small such that $C_1\eps \ee^{At_*},C_3\eps<\delta/3$ for $\eps\leq \eps_*$ we obtain from Proposition~\ref{Prop:local:convergence} that
\begin{equation*}
 \norm{f_t^\eps-Q_\eps}_{L_\alpha^1}\leq C_4\norm{f_{t_*}^\eps-Q_\eps}_{L_\alpha^1}\ee^{-(2\sqrt{\rho}-c\eps)(t-t_*)}.
\end{equation*}
Due to Proposition~\ref{Prop:stability:2} there is a constant $B>0$ such that $\norm{f_{t_*}^\eps-Q_\eps}_{L_\alpha^1}\leq \norm{f_0-Q_\eps}_{L_\alpha^1}\ee^{Bt_{*}}$ which finally implies
\begin{equation*}
 \norm{f_t^\eps-Q_\eps}_{L_\alpha^1}\leq C_4\ee^{(2\sqrt{\rho}-c\eps+B)t_*}\norm{f_0-Q_\eps}_{L_\alpha^1}\ee^{-(2\sqrt{\rho}-c\eps)t}.
\end{equation*}
\end{proof}

\subsection*{Acknowledgments}

This work has been supported by the Kempe Foundation through the grant JCSMK22-0153.

\end{document}